\makeatletter \@addtoreset{equation}{section}
\newtheorem{theorem}{Theorem}
\newtheorem{proposition}{Proposition}
\newtheorem{lemma}{Lemma}
\newtheorem{remark}{Remark}
\newtheorem{coro}{Corollary}
\newcommand{\cc}{C_{c}^{\infty}(\R^{N})}
\newcommand{\R}{\mathbb{R}}
\newcommand{\C}{\mathbb{C}}
\newcommand{\N}{\mathbb{N}}
\newcommand{\Rp}{\textrm{\emph{Re}\,}}
\begin{document}
\title[Fourth-order operators with unbounded coefficients]{Fourth-order operators with unbounded coefficients}

\email{fgregorio@unisa.it}
\email{chiara.spina@unisalento.it} 
\email{ctacelli@unisa.it}
\thanks{The authors are members of the Gruppo Nazionale per l’Analisi Matematica, la Probabilità
e le loro Applicazioni (GNAMPA) of the Istituto Nazionale di Alta Matematica (INdAM). This article is based upon work from COST Action CA18232 MAT-DYN-NET, supported by COST (European Cooperation in Science and Technology), www.cost.eu.}
\thanks{$^*$Corresponding author: Federica Gregorio}

\subjclass{35K35, 35B45, 47D06}
\keywords{Higher order elliptic and parabolic equations, analytic semigroups, a priori estimates, unbounded coefficients, domain characterization}

\maketitle

\centerline{\scshape
Federica Gregorio$^{
*1}$, Chiara Spina$^{2}$, Cristian Tacelli$^{1}$}

\medskip

{\footnotesize
 \centerline{$^1$Dipartimento di Matematica, Università degli Studi di Salerno, Fisciano, Italy}
} 

\medskip

{\footnotesize
 \centerline{$^2$Dipartimento di Matematica e Fisica "Ennio De Giorgi", Università del Salento, Lecce, Italy}
}

\begin{abstract}
We prove that  operators of the form $A=-a(x)^2\Delta^{2}$, with $|D a(x)|\leq c a(x)^\frac{1}{2}$,  generate  analytic semigroups in $L^p(\R^N)$ for $1<p\leq\infty$ and in $C_b(\R^N)$. In particular, we deduce generation results for the operator $A :=- (1+|x|^2)^{\alpha} \Delta^{2}$, $0\leq\alpha\leq2$.  Moreover, we characterize the maximal domain of such operators in $L^p(\R^N)$ for $1<p<\infty$.
\end{abstract}

\section{Introduction}
There exists nowadays a huge literature about second order elliptic operators with unbounded coefficients and, in particular, polynomial growing coefficients, see for example \cite{met-oka-sob-spi,bcgt} and the references therein. One of the first studied operators in this  context has been $L=(1+|x|^\alpha)\Delta$, $\alpha>0$.  We notice that methods and results are different in the cases $\alpha\leq2$ or $\alpha>2$, cf. \cite{for-lor,met-spi2}. 

\medskip
On the other hand, recently, the interest towards elliptic operators of higher order has increased considerably: they appear, for instance, in models of elasticity \cite{mel}, free boundary problems \cite{ada} and non-linear elasticity \cite{ant}.
The semigroup generated by a class of higher order elliptic operators with  bounded measurable coefficients in $L^p(\R^N)$ has been systematically studied by Davies, cf. \cite{dav95a}. Unbounded coefficients for higher order operators have also been considered in recent times. In \cite{AGRT} the authors obtained generation results for the square of the Kolmogorov operator $L=\Delta+\frac{\nabla\mu}{\mu}\cdot\nabla$  in the weighted space $L^2(\R^N,d\mu)$,  giving also sufficient conditions on the measure $\mu$ for the characterization of both the domains of $L$  and $-L^2$. In \cite{gre-tac} polynomial growing coefficients are considered. More precisely, the operator $A=(1+|x|^\alpha)^2\Delta^2+|x|^{2\beta}$ with $\alpha>0$ and $\beta>(\alpha-2)^+$ has been considered. By using form methods, it has been proved that $(-A,D(A))$ is the   generator of an analytic $C_0$-semigroup in the space $L^2(\R^N)$ for $N\geq5$ and the domain has been characterised when $N>8$. The constraints on the dimension are due to the application of  Rellich's inequalities.
   
\medskip

In this paper we focus on elliptic and parabolic solvability in   $L^p$ spaces for every $p\in(1,\infty]$ of the operator $$A :=- (1+|x|^2)^{\alpha} \Delta^{2}$$ 
in the case $ 0\leq\alpha\leq2$. More generally, our methods enable us to consider operators of the form $A=-a(x)^2\Delta^2$ with suitable assumptions on the growth of the coefficient $a$.
It is well known that maximum principles for higher order differential equations fail, and hence the positivity preserving property of the semigroup is not assured. Moreover, the classical symmetric Markov semigroup theory is
not applicable, therefore there is no a priori reason why the generated semigroup on $L^2(\R^N)$ should be extendable
to further $L^p$ spaces. In this respect, another feature of higher order operators is the lack of dissipativity in $L^p(\R^N)$ when $p\neq2$, cf. \cite{lan-maz}. Therefore, one cannot expect to obtain the generation of a semigroup by means of dissipativity in $L^p$ for every $1<p<\infty$.
We overcome the problem  by proving that the operator $A$, endowed with the domain
$$D(A_p)=\{u\in W_{\rm loc}^{4,p}(\R^N)\cap L^p(\R^N):\ a^\frac{1}{2}D u,\ a D^2u,\ a^\frac{3}{2}D^3 u,\ a^2D^4 u\in L^p(\R^N)\}$$
is sectorial for every $1<p<\infty$.  
In particular, the paper is organised as follows. In Section 2, through a localisation argument in balls with variable radius, as in \cite{for-lor}, we prove  a priori estimates of the form
\[\|a^\frac12D u\|_p+\|a D^2u\|_p+\|a^\frac32D^3u\|_p+\|a^2D^4u\|_p\leq C\left(\|Au\|_p+\|u\|_p\right)\]
in $D(A_p)$. In view of proving sectoriality following Agmon's ideas (see \cite[Theorem 2.1]{agm62}), by suitably freezing the coefficients,  we provide a priori estimates also for the  time dependent operator $A_1=-a^2(x)\Delta^2-e^{i\theta}D_t^{(4)},\ \theta\in\left[-\frac12,\frac12\right]$.
Furthermore, making use of interior estimates, at the end of Section 2, we characterize the maximal domain by proving that $D(A_p)$ coincides with $D(A_{p, \rm max})=\{u\in W^{4,p}_{\rm loc}(\R^N)\cap L^p(\R^N):\ u,\  Au\in L^p(\R^N)\}$.  

\smallskip
In Section 3 we deduce the generation results in $L^p$, $1<p<\infty$. By arguing as in \cite[Theorem 2.1]{agm62}, thanks to the a priori estimates for $A_1$, we prove the resolvent estimate 
\[|\lambda|\| u\|_p\leq C\|\lambda u-Au\|_p\] for $\Rp\lambda$ large enough. After an approximation precedure, this estimate leads to the solvability of the elliptic problem $\lambda u-Au =f$ for $\Rp\lambda>r_p$ for some suitable $r_p>0$ and then, by classical results, to the generation of an analytic semigroup.

\smallskip
Finally, Section 4 deals with generation results in the spaces $C_b(\R^N)$ and $L^\infty(\R^N)$ via sectoriality proved by means of Stewart-Masuda localization technique.

\medskip

 \textbf{Notations.} We use standard notations for function spaces. We denote by $L^p(\R^N)$ and $W^{k,p}(\R^N)$ the standard Lebesgue and Sobolev spaces, respectively. $C_c^\infty(\R^N)$ is the space of test functions and $C_b(\R^N)$ is the space of bounded continuous functions. We denote by $W_{\rm comp}^{4,p}(\R^{N})$   the space of functions in $W^{4,p}(\R^N)$ with compact support.
$B_R$ denotes the open ball of radius $R$ centred at 0, whereas $B_R(x_0)$ is the ball centred at $x_0$ for some $x_0\in\R^N$.  We denote by $B_R^c$ the complementary set of the open ball, $B_R^c=\R^N\setminus B_R$. Moreover, we use the symbol $\|\cdot\|_{p,R}$ to shorten the notation $\|\cdot\|_{L^p(B_R(x_0))}$. In the proofs, $C$ is a  positive constant that may vary from line to line. Finally, for every function $u$ smooth enough we set
\begin{align*}|D^4u|=\left(\sum_{i,j,k,l=1}^N|D_{ijkl}u|^2\right)^\frac12&, \ |D^3u|=\left(\sum_{i,j,k=1}^N|D_{ijk}u|^2\right)^\frac12\\|D^2u|=\left(\sum_{i,j=1}^N|D_{ij}u|^2\right)^\frac12&, \ |Du|=\left(\sum_{i=1}^N|D_{i}u|^2\right)^\frac12.\end{align*}

\section{A priori estimates and domain characterization}

Let us consider operators of the form  $A=-a(x)^2\Delta^2$. Our standing assumptions on the coefficient of $A$ are
\begin{equation} \label{gradient}\tag{H} a \in C^1(\R^N),\quad
 a(x)>0\ {\rm for\ all}\ x\in\R^N,\quad |D a(x)|\leq c a(x)^\frac{1}{2},
\end{equation}
for some positive constant $c$.
The  last inequality   will be crucial to get the main results of the paper and it is equivalent to $\|D a^\frac{1}{2}\|_\infty\leq \frac{c}{2}$.

\begin{remark}
Observe that, if $0\leq\alpha\leq2$, $a(x)=(1+|x|^2)^{\frac{\alpha}{2}}$ satisfies (\ref{gradient}), indeed
\begin{equation*}  
|D a(x)|=\alpha (1+|x|^2)^\frac{\alpha}{4}(1+|x|^2)^{\frac{\alpha}{4}-1}|x|\leq \alpha a(x)^\frac{1}{2}.
\end{equation*}
\end{remark}
We start by proving some a priori estimates in 
$$D(A_p)=\{u\in W_{\rm loc}^{4,p}(\R^N)\cap L^p(\R^N):\ a^\frac{1}{2}D u,\ a D^2u,\ a^\frac{3}{2}D^3 u,\ a^2D^4 u\in L^p(\R^N)\}$$ for the operator $A=- a(x)^2 \Delta^{2}$ and for an auxiliary operator which will be later introduced. 
We need  a density result, useful to work with smooth functions, and a covering result to work with localized a priori estimates.
\begin{lemma}
Let  $1<p<\infty$ and assume that $a$ satisfies (\ref{gradient}). Then $C_c^\infty(\R^N)$ is dense in $D(A_p)$ with respect to the norm 
$$\|u\|_{D(A_p)}=\|u\|_p+\|a^\frac{1}{2}D u\|_p+\|a D^2u\|_p+\|a^\frac{3}{2}D^3 u\|_p+\|a^2D^4 u\|_p.$$
\end{lemma}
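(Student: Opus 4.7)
The plan is a standard truncate-then-mollify argument, where the only non-trivial point is keeping control of the weight $a^{k/2}$ against the derivatives of the cutoff. Fix $\eta\in C_c^\infty(\R^N)$ with $\eta\equiv1$ on $B_1$, $\eta\equiv0$ outside $B_2$, $0\le\eta\le1$, and set $\eta_n(x)=\eta(x/n)$, so that $|D^j\eta_n|\le C n^{-j}$ for $0\le j\le 4$ and $\supp D^j\eta_n\subset B_{2n}\setminus B_n$ for $j\ge 1$. Given $u\in D(A_p)$, I would first show that $\eta_n u\to u$ in $\|\cdot\|_{D(A_p)}$, and then, since $\eta_n u\in W^{4,p}_{\rm comp}(\R^N)$, approximate each $\eta_n u$ by mollification.

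\textbf{Truncation.} Expand $D^k(\eta_n u)=\sum_{j=0}^k\binom{k}{j}(D^j\eta_n)(D^{k-j}u)$ for $k\in\{0,1,2,3,4\}$. The leading term ($j=0$) contributes $a^{k/2}\eta_n D^k u$, which converges to $a^{k/2}D^k u$ in $L^p$ by dominated convergence, because $a^{k/2}D^ku\in L^p(\R^N)$ by definition of $D(A_p)$ and $\eta_n\to 1$ pointwise with $|\eta_n|\le 1$. For a cross term with $j\ge 1$ I would write
\[
a^{k/2}(D^j\eta_n)(D^{k-j}u)=\bigl(a^{j/2}D^j\eta_n\bigr)\bigl(a^{(k-j)/2}D^{k-j}u\bigr).
\]
The hypothesis \eqref{gradient} gives $\|Da^{1/2}\|_\infty\le c/2$, so $a^{1/2}(x)\le a^{1/2}(0)+(c/2)|x|$, hence $a^{j/2}(x)\le C(1+|x|)^j\le C n^j$ on $\supp D^j\eta_n\subset B_{2n}$. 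Combined with $|D^j\eta_n|\le Cn^{-j}$ this shows $\|a^{j/2}D^j\eta_n\|_\infty\le C$ uniformly in $n$. The remaining factor $a^{(k-j)/2}D^{k-j}u$ belongs to $L^p(\R^N)$ (all four relevant weighted derivatives plus $u$ are in $L^p$ by the definition of $D(A_p)$), so
\[
\|a^{k/2}(D^j\eta_n)(D^{k-j}u)\|_p\le C\,\|a^{(k-j)/2}D^{k-j}u\|_{L^p(B_{2n}\setminus B_n)}\xrightarrow[n\to\infty]{}0
\]
by absolute continuity of the integral.

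\textbf{Mollification.} Fix $v=\eta_n u\in W^{4,p}_{\rm comp}(\R^N)$, supported in $B_{2n}$, and set $v_\varepsilon=v*\rho_\varepsilon$ with a standard mollifier, so that $v_\varepsilon\in C_c^\infty(\R^N)$ with support in $B_{2n+1}$ for $\varepsilon\le 1$. On this fixed compact set the continuous positive function $a$ is bounded above and below away from zero, so each weight $a^{k/2}$ is in $L^\infty(B_{2n+1})$. Since $D^k v_\varepsilon=(D^k v)*\rho_\varepsilon\to D^k v$ in $L^p(\R^N)$ for $0\le k\le 4$ by classical mollifier estimates, multiplying by the bounded weight gives $\|a^{k/2}(D^k v-D^k v_\varepsilon)\|_p\to 0$ as $\varepsilon\to 0$. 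A diagonal selection in $n$ and $\varepsilon$ then produces a sequence in $C_c^\infty(\R^N)$ converging to $u$ in $\|\cdot\|_{D(A_p)}$.

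\textbf{Main obstacle.} The one point where something beyond soft analysis is used is the cross-term bound in the truncation step: it requires exactly the at-most-quadratic growth of $a$ encoded in \eqref{gradient}, which allows $a^{j/2}$ to absorb the $n^{-j}$ decay of $D^j\eta_n$ on the annulus $B_{2n}\setminus B_n$. Without this growth restriction, the cutoff method would fail in the higher-order weighted norms.
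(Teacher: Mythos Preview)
Your proof is correct and follows essentially the same truncate-then-mollify approach as the paper. The only cosmetic difference is that the paper records the cutoff bound in the form $|D^h\varphi_n|\le C\,a^{-h/2}$ (using $a(x)\le C(1+|x|^2)$) and then applies dominated convergence, whereas you equivalently split $a^{k/2}=a^{j/2}\cdot a^{(k-j)/2}$ and bound $\|a^{j/2}D^j\eta_n\|_\infty\le C$ directly; the substance is identical.
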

\begin{proof} 
We first observe that if we take $u\in W^{4,p}(\R^N)$ with compact support, then we can approximate it in the norm of $D(A_p)$ with smooth functions by convolution with standard mollifiers. Therefore it remains to prove that a function in $D(A_p)$ can be approximated by a sequence of functions with compact support.
Let $u\in D(A_p)$ and set $u_n=u\varphi_n$ with $\varphi_n\in C^\infty_c(\R^N)$ such that 
\[\begin{cases} \varphi_n=0 \quad \textrm{in}\ B_{2n}^c,\\ \varphi_n=1\quad \textrm{in}\ B_n,\\0\leq\varphi_n\leq1,\\|D^h\varphi_n|\leq {C}{(1+|x|)^{-h}}\quad \textrm{for}\ h=1,2,3,4.
\end{cases}\]
Observe that such functions exist. Indeed, one can consider a function $\varphi\in C^\infty([0,\infty))$ such that $0\leq\varphi\leq1, \varphi(t)=1$ for $t\in[0,1]$ and $\varphi(t)=0$ for $t\geq2$ and then set 
\[\varphi_n(x)=\begin{cases} 1 & \textrm{in}\ B_n,\\\varphi\left(\frac{|x|}{n}\right) & \textrm{in}\ B_n^c.\end{cases}\]
Moreover, by assumption (\ref{gradient}), $a(x)\leq C(1+|x|^2)$ for some positive constant $C$ and for every $x\in\R^N$ and then
\begin{equation}\label{derphi}
|D^h\varphi_n|\leq Ca^{-\frac{h}{2}}.
\end{equation}

We  show   that $u_n$ converges to $u$ with respect to the norm $||\cdot||_{D(A_p)}$. One has $u_n\to u\in L^p(\R^N)$ by dominated convergence. For the first order term we observe that
\begin{align*}
|a^\frac12D u_n-a^\frac12D u|&= a^\frac12|D u\, \varphi_n+uD\varphi_n-D u|
\\&\leq  (1-\varphi_n)a^\frac12|D u|+  a^\frac12 |u||D\varphi_n| 
\end{align*}
and the right-hand side of the inequality converges to 0 a.e. Moreover,  in view of \eqref{derphi},
\[(1-\varphi_n)a^\frac12|D u|+ a^\frac12 |u||D\varphi_n|\leq  (1-\varphi_n)a^\frac12|D u|+C|u|\chi_{B_{2n}\setminus B_n}\in L^p(\R^N).\]
Therefore $a^\frac12D u_n$ converges to $a^\frac12D u$  in $L^p(\R^N)$.
As regards the other terms, for $h=2,3,4$, one has 
\begin{align*}
|a^{\frac{h}{2}} D^h u_n-a^{\frac{h}{2}} D^h u|&\leq  (1-\varphi_n)a^{\frac{h}{2}} |D^h u|+C \sum_{i=0}^{h-1}\sum_{\substack{|l|=i\\|m|=h-i}}  a^{\frac{h}{2}}|D^lu||D^m\varphi_n|\to0 \ \textrm{a.e.}
\end{align*}
and by \eqref{derphi}
\begin{align*}(1-\varphi_n)a^{\frac{h}{2}} |D^h u|+C& \sum_{i=0}^{h-1}\sum_{|l|=i,|m|=h-i}  a^{\frac{h}{2}}|D^lu||D^m\varphi_n|\leq\\ &\qquad(1-\varphi_n)a^{\frac{h}{2}} |D^h u|+C\sum_{i=0}^{h-1}\sum_{\substack{|l|=i\\|m|=h-i}}  a^{\frac{h-m}{2}}|D^lu|\chi_{B_{2n}\setminus B_n}\in L^p(\R^N).\end{align*}
\end{proof}

The  covering result with variable radius balls as formulated here  is proved  in \cite{cup-for}.
\begin{proposition}\cite[Proposition 6.1]{cup-for} \label{pr:covering}
Let ${\mathcal F} =\{  B_{\rho (x)}(x)\}_{x\in \R^{N}}$, 
where $\rho : \R^{N} \to \R_{+}$ is a Lipschitz continuous function with Lipschitz
constant $\kappa$ strictly less than $\frac 12$.
Then, there exist a countable subcovering $\{ B_{\rho(x_{n})}(x_{n})\}_{n\in \N}$
 and a natural number $\xi=\xi (N,\kappa)$ such that at most $\xi$ 
among the doubled balls $\{ B_{2\rho(x_{n})}(x_{n} )\}_{n\in \N}$ overlap.
\end{proposition}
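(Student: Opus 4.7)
The plan is to construct the subcovering by a Vitali-type maximality argument that builds disjointness in at the selection stage, and then to derive the bounded overlap of the doubled balls from a volume comparison. The single structural ingredient I would use repeatedly is that the Lipschitz condition on $\rho$ forces any two radii in $\mathcal{F}$ with sufficiently close centres to be comparable with a constant depending only on $\kappa$: indeed, if $|x-y|\le c(\rho(x)+\rho(y))$ and $\kappa c<1$, then
\[
|\rho(x)-\rho(y)|\le\kappa|x-y|\le\kappa c(\rho(x)+\rho(y)),
\]
which rearranges to $\rho(x)\le\frac{1+\kappa c}{1-\kappa c}\rho(y)$. The assumption $\kappa<\frac12$ is exactly what keeps this denominator positive in the range of $c$ that will appear.

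First I would pick a small $\lambda=\lambda(\kappa)>0$ and, using Zorn's lemma, extract a maximal family $\{x_n\}_{n\in\N}$ such that the shrunken balls $\{B_{\lambda\rho(x_n)}(x_n)\}$ are pairwise disjoint. Countability follows from separability of $\R^N$: each shrunken ball contains a point of $\Q^N$, and these points are necessarily distinct. Second, I would verify that $\{B_{\rho(x_n)}(x_n)\}$ still covers $\R^N$. Given $y\in\R^N$, the ball $B_{\lambda\rho(y)}(y)$ cannot be appended to the maximal family, so it meets some $B_{\lambda\rho(x_n)}(x_n)$, giving $|y-x_n|\le\lambda(\rho(y)+\rho(x_n))$; the comparability rule above (with $c=\lambda$) then allows me, by choosing $\lambda$ small enough, to conclude that $|y-x_n|<\rho(x_n)$, i.e.\ $y\in B_{\rho(x_n)}(x_n)$.

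Third, for the bounded overlap of the doubled balls, I would fix $y\in\R^N$ and examine $I_y:=\{n:y\in B_{2\rho(x_n)}(x_n)\}$. For each $n\in I_y$, the inequality $|y-x_n|<2\rho(x_n)\le 2(\rho(y)+\rho(x_n))$ and the comparability rule yield $\rho(x_n)\sim\rho(y)$ with constants depending only on $\kappa$. Hence every centre $x_n$ with $n\in I_y$ lies in a ball $B_{R}(y)$ with $R\sim\rho(y)$, while the pairwise disjoint shrunken balls $\{B_{\lambda\rho(x_n)}(x_n)\}_{n\in I_y}$, each of volume $\gtrsim\rho(y)^N$, all sit inside a slightly enlarged ball around $y$ of volume still $\lesssim\rho(y)^N$. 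A volume comparison then produces $|I_y|\le\xi=\xi(N,\kappa)$.

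The most delicate point, and where I expect to spend the most care, is the calibration of the single parameter $\lambda$: it must be small enough that the computation in the second step actually yields $|y-x_n|<\rho(x_n)$ rather than some looser enlargement, while simultaneously the shrunken balls $B_{\lambda\rho(x_n)}(x_n)$ must retain the correct scaling so that the volume count in the third step still gives a dimensional, $\kappa$-dependent constant. Once $\lambda$ is tuned in this way (and the assumption $\kappa<\frac12$ is used to ensure all the comparability denominators remain bounded away from zero), the three steps stitch together without further subtlety.
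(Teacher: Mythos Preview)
The paper does not supply its own proof of this proposition: it is quoted verbatim from \cite[Proposition~6.1]{cup-for} and used as a black box throughout. So there is no ``paper's proof'' to compare against; what you have written is a self-contained argument where the authors simply import the result.

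Your sketch is correct and is essentially the standard Vitali--Besicovitch argument for variable-radius coverings. A few minor clarifications: in Step~2, once $|y-x_n|<\lambda(\rho(y)+\rho(x_n))$, the comparability rule with $c=\lambda$ gives $\rho(y)\le\frac{1+\kappa\lambda}{1-\kappa\lambda}\rho(x_n)$, hence $|y-x_n|<\frac{2\lambda}{1-\kappa\lambda}\rho(x_n)$, and the choice $\lambda<\frac{1}{2+\kappa}$ suffices for covering---this step does not yet use $\kappa<\tfrac12$. The hypothesis $\kappa<\tfrac12$ is used exactly where you indicate, in Step~3: from $|y-x_n|<2\rho(x_n)$ one gets $(1-2\kappa)\rho(x_n)<\rho(y)$, and the positivity of $1-2\kappa$ is what gives the upper bound on $\rho(x_n)$ needed for the volume count. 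With that calibration the argument closes as you describe.
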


Observe that by the inequality (\ref{gradient}) satisfied by $a(x)$, the function $\rho(x)=\frac{\eta}{2c}a(x)^\frac{1}{2}$, $0<\eta\leq 1$, satisfies the assumptions of the previous covering Proposition with Lipschitz constant smaller than $\frac{\eta}{4}$. 

The next lemma constitutes a first step toward the a priori estimates.
\begin{lemma}  \label{interpolation}
Let  $1<p<\infty$ and assume that $a$ satisfies (\ref{gradient}). Then there exists a positive constant  $C=C(N,p,c)$ such that for every $\varepsilon>0$ the following inequalities hold
\begin{align}
&\|a^{\frac{1}{2}}Du\|_{p}\leq \varepsilon ^{3}\|a^{2}D^{4}u\|_{p}+\frac{C}{\varepsilon}\|u\|_{p} \label{eq:interp00-01}\\
&\|aD^{2}u\|_{p}\leq \varepsilon^2 \|a^{2}D^{4}u\|_{p}+\frac{C}{\varepsilon^2}\|u\|_{p}\label{eq:interp00-02} \\
&\|a^{\frac{3}{2}}D^{3}u\|_{p}\leq \varepsilon \|a^{2}D^{4}u\|_{p}+\frac{C}{\varepsilon^{3}}\|u\|_{p} \label{eq:interp00-03}\\
& \|a^2D^4u\|_p\leq C\left( \|Au\|_p+\|u\|_p\right) \label{eq:calderon-zigmund}
\end{align}
for every $u\in C_{c}^{\infty}(\R^{N})$. 
\end{lemma}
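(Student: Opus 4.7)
The strategy is a localization argument based on Proposition~\ref{pr:covering} (the variable-radius covering). Take $\rho(x) := \frac{\eta}{2c}\, a(x)^{1/2}$ with $\eta\in(0,1]$ to be chosen. By assumption \eqref{gradient} one has $\|D a^{1/2}\|_\infty \leq c/2$, so $\rho$ is Lipschitz with constant $\eta/4<1/2$ and Proposition~\ref{pr:covering} yields balls $B_n = B_{\rho_n}(x_n)$ covering $\R^N$ whose doubled balls $B_n^* := B_{2\rho_n}(x_n)$ have overlap at most $\xi=\xi(N)$. Setting $a_n:=a(x_n)$, the Lipschitz bound gives $|a^{1/2}(x)-a_n^{1/2}|\leq (\eta/2)\,a_n^{1/2}$ for every $x\in B_n^*$; consequently $a(x)\asymp a_n$ on $B_n^*$, uniformly in $n$ and $\eta\in(0,1]$.

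On each $B_n^*$ I would then apply two classical interior estimates, both obtained from a reference estimate on $B_1$ by scaling (and hence with a constant $C_0=C_0(N,p)$ independent of $n$): the interpolation inequality
\begin{equation*}
\|D^k u\|_{L^p(B_n)} \leq C_0\Bigl((2\rho_n)^{4-k} \|D^4 u\|_{L^p(B_n^*)} + (2\rho_n)^{-k} \|u\|_{L^p(B_n^*)}\Bigr), \qquad k=1,2,3,
\end{equation*}
and the Calder\'on--Zygmund estimate for the bilaplacian
\begin{equation*}
\|D^4 u\|_{L^p(B_n)} \leq C_0\Bigl(\|\Delta^2 u\|_{L^p(B_n^*)} + (2\rho_n)^{-4} \|u\|_{L^p(B_n^*)}\Bigr).
\end{equation*}
Multiplying the first by $a_n^{k/2}$ and the second by $a_n^2$, and using the elementary identities $a_n^{k/2}\rho_n^{4-k}\asymp \eta^{4-k} a_n^2$, $a_n^{k/2}\rho_n^{-k}\asymp \eta^{-k}$, $a_n^2\rho_n^{-4}\asymp \eta^{-4}$, together with $a\asymp a_n$ on $B_n^*$, one obtains (fixing $\eta=1$ in the second case)
\begin{align*}
\|a^{k/2} D^k u\|_{L^p(B_n)} &\leq C\bigl(\eta^{4-k}\|a^2 D^4 u\|_{L^p(B_n^*)} + \eta^{-k}\|u\|_{L^p(B_n^*)}\bigr),\\
\|a^2 D^4 u\|_{L^p(B_n)} &\leq C\bigl(\|Au\|_{L^p(B_n^*)} + \|u\|_{L^p(B_n^*)}\bigr).
\end{align*}

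To conclude, I would raise these local estimates to the $p$-th power and sum over $n$, invoking the two properties of the covering: $\bigcup_n B_n = \R^N$ gives $\|f\|_p^p \leq \sum_n \|f\|_{L^p(B_n)}^p$, and the bounded overlap of $\{B_n^*\}$ gives $\sum_n \|f\|_{L^p(B_n^*)}^p \leq \xi \|f\|_p^p$. This yields the global versions of the two displays above; relabelling $\varepsilon := C^{1/(4-k)} \eta$ in the first (separately for $k=1,2,3$) produces \eqref{eq:interp00-01}--\eqref{eq:interp00-03} and \eqref{eq:calderon-zigmund} in the form stated.

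The main obstacle is the uniformity of $C_0$ across the covering. This is resolved by phrasing the interior estimates as rescalings of fixed estimates on the unit ball $B_1$; the Lipschitz bound $|Da|\leq c\,a^{1/2}$ from \eqref{gradient} is precisely what makes $\rho$ admissible in Proposition~\ref{pr:covering} and simultaneously guarantees the uniform comparison $a \asymp a_n$ on $B_n^*$, so that $a^{k/2}$ and $a_n^{k/2}$ may be freely exchanged. The summation of $p$-th powers using the finite overlap is then essentially bookkeeping.
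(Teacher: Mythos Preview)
Your argument is correct and shares the localize-and-sum skeleton with the paper, but organizes the local step differently. The paper first proves the \emph{one-step} weighted interpolation $\|a^{h/2}D^h u\|_p \leq C\|a^{(h+1)/2}D^{h+1}u\|_p^{1/2}\|a^{(h-1)/2}D^{h-1}u\|_p^{1/2}$ by cutting off, invoking the classical bound $\|Dv\|_p\leq C\|D^2v\|_p^{1/2}\|v\|_p^{1/2}$, and summing over the covering (with $\rho=\frac{1}{2c}a^{1/2}$, i.e.\ $\eta=1$ fixed); it then chains the three resulting multiplicative inequalities to obtain \eqref{eq:interp00-01}--\eqref{eq:interp00-03}, the parameter $\varepsilon$ entering through Young's inequality and hence ranging over all of $(0,\infty)$. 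For \eqref{eq:calderon-zigmund} the paper applies \emph{global} Calder\'on--Zygmund to $\vartheta u$, which leaves lower-order terms $\|a^{j/2}D^ju\|_{p,2R}$ ($j=1,2,3$) on the right-hand side after expanding $\Delta^2(\vartheta u)$; these are then absorbed via the interpolation inequalities just proved. You instead take the \emph{interior} interpolation and interior $L^p$ regularity on the unit ball as black boxes (where the intermediate-derivative absorption is already built in), and manufacture the $\varepsilon$-dependence by tuning the radius scale $\eta$. This is more streamlined but leans on slightly higher-level inputs. Two small points are worth making explicit: your $\varepsilon$ is a priori confined to a bounded interval since $\eta\in(0,1]$, and the overlap bound $\xi=\xi(N,\kappa)$ must remain uniform as $\kappa=\eta/4\to 0$. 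Both are harmless (large $\varepsilon$ follows once \eqref{eq:calderon-zigmund} is available, and $\xi$ is nonincreasing as $\kappa$ decreases), but neither issue arises in the paper's route because there the covering is fixed and $\varepsilon$ comes from Young.
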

\begin{proof}  First  we prove for $h\in \N$ 
\begin{equation}\label{eq:interp1}
\|a^{\frac{h}{2}}D^{h}u \|_{p}\leq C \|a^{\frac{h+1}{2}}D^{h+1} u\|^{\frac{1}{2}}_{p}\|a^{\frac{h-1}{2}}D^{h-1}u\|^{\frac{1}{2}}_{p}.
\end{equation}
To this purpose we show that for $v\in C_{c}^{\infty}(\R^{N})$
\begin{equation}\label{eq:interp0}
\|a^{\frac{h}{2}}D v\|_{p}\leq C \|a^{\frac{h+1}{2}}D^2 v\|^{\frac 12}_{p}\|a^{\frac{h-1}{2}}v\|^{\frac 12}_{p},
\end{equation}
from which \eqref{eq:interp1} easily follows replacing $v$ by     $D^{h-1}u$.

Let us set $\rho(x)=\frac{1}{2c}a(x)^\frac{1}{2}$ and note that, as stated before,  by \eqref{gradient} it follows that $\rho(x)$ satisfies the assumptions of Proposition \ref{pr:covering} with Lipschitz constant smaller than $\frac{1}{4}$. We fix  $x_{0}\in \R^{N}$ and set $R=\rho (x_{0})=\frac{1}{2 c}a^{\frac{1}{2}}(x_{0})$. 
 Then, if $x\in B_{2R}(x_0)$ the following inequalities hold
\[
\frac{1}{2}a(x_{0})^{\frac{1}{2}}\leq a(x)^{\frac{1}{2}}\leq \frac{3}{2}a(x_{0})^{\frac{1}{2}}
\]
and consequently 
\[
\left (\frac{1}{2}\right)^{ k}a(x_{0})^{\frac{k}{2}}\leq a(x)^{\frac{k}{2}}\leq \left (\frac{3}{2}\right)^{k}a(x_{0})^{\frac{k}{2}}
\]
for $k=h-1,h,h+1$.

Let now  $\vartheta\in C_c^{\infty }(\R^N)$ be such that $0\leq \vartheta \leq 1$,
$\vartheta(x)=1$ for $x\in B_{R}(x_{0}),\vartheta(x)=0$ for $x\in B^{c}_{2R}(x_{0})$ and $|D^h\vartheta|\leq {C}{R^{-h}}$ for $h=1,2,3,4$.

By the classical interpolation inequality (see \cite[Proposition 1.3.8]{lor-rha-book}), we have
\begin{align*}
\|a^{\frac{h}{2}}D v\|_{p, R}
&\leq C\|a(x_{0})^{\frac{h}{2}}D v\|_{p, R}\leq  C\|a(x_{0})^{\frac{h}{2}}D (\vartheta v)\|_{p}\\
& \leq C  \| a(x_{0})^{\frac{h+1}{2}} D^{2} (\vartheta v)\|_{p}^{\frac{1}{2}}  \| a(x_{0})^{\frac{h-1}{2}} \vartheta v\|^{\frac{1}{2}}_{p}\\
& \leq \varepsilon  \|a(x_{0})^{\frac{h+1}{2}} D^{2} (\vartheta v)\|_{p}
		+\frac{C}{\varepsilon } \| a(x_{0})^{\frac{h-1}{2}} \vartheta v\|_{p}\\
&  \leq \varepsilon  \|a(x_{0})^{\frac{h+1}{2}} D^{2} v\|_{p,2R}+\frac{C}{R}\varepsilon \|a(x_{0})^{\frac{h+1}{2}} D v\|_{p,2R}
		+\frac{C}{R^{2}}\varepsilon \|a(x_{0})^{\frac{h+1}{2}} v\|_{p,2R}\\
&\quad		+\frac{C}{\varepsilon}\| a(x_{0})^{\frac{h-1}{2}} v\|_{p,2R}.
\end{align*}

Then
taking into account that 
$\frac{1}{R}a(x_{0})^{\frac{h+1}{2}}=2c a(x_{0})^{\frac{h}{2}}\leq Ca(x)^{\frac{h}{2}}$
and $\frac{1}{R^{2}}a(x_{0})^{\frac{h+1}{2}}=4c^2 a(x_{0})^{\frac{h-1}{2}}\leq Ca(x)^{\frac{h-1}{2}}$ in $B_{2R}(x_0)$ one has
\begin{align}\label{eq:cover-x00}
 \|a^{\frac{h}{2}}D v\|_{p,R} \leq C \left( \varepsilon \|a^{\frac{h+1}{2}}D^2 v\|_{p,2R}+\varepsilon \|a^{\frac{h}{2}}D v\|_{p,2R}
		+\left(\varepsilon+\frac{1}{\varepsilon}\right)\|a^{\frac{h-1}{2}}v\|_{p,2R}
		\right).
\end{align}

Let $\{B_{\rho(x_n )}(x_n)\}$ be a countable covering of $\R^N$ as in Proposition \ref{pr:covering}
such that at most $\zeta $ among the double balls $\{B_{2\rho(x_n )}(x_n)\}$ overlap.

We write \eqref{eq:cover-x00} with $x_0$ replaced by $x_n$ and sum over $n$.
Taking into account the above covering result, we get
$$
\|a^{\frac{h}{2}}D v\|_{p}\leq 
C \left( \varepsilon \|a^{\frac{h+1}{2}}D^2 v\|_{p}+\varepsilon \|a^{\frac{h}{2}}D v\|_{p}
		+\left(1+\frac{1}{\varepsilon}\right)\|a^{\frac{h-1}{2}}v\|_{p}\nonumber
		\right).
$$
Choosing $\varepsilon'=\dfrac{C\varepsilon}{1-C\varepsilon}$   one can find a suitable positive $C$ such that  
$$
\|a^{\frac{h}{2}}D v\|_{p}\leq 
\varepsilon' \|a^{\frac{h+1}{2}}D^2 v\|_{p}
+\dfrac{C}{\varepsilon'}\|a^{\frac{h-1}{2}}v\|_{p}\nonumber
$$
from which 
\eqref{eq:interp0} follows.

In particular, we have proved the following inequalities
\begin{align}
\|a^{\frac{1}{2}}Du\|_p
	&\leq C\|aD^2u\|^\frac{1}{2}\|u\|^\frac{1}{2}_p \label{eq:inter1_01}\\
\|aD^2u\|_p
	&\leq C\|a^{\frac 32}D^3u\|^\frac{1}{2}_p\|a^{\frac 12}Du\|^\frac{1}{2}_p\label{eq:inter1_02}\\
\|a^{\frac{3}{2}}D^3u\|_p
	&\leq C\|a^2D^4u\|^\frac{1}{2}_p\|aD^2u\|^\frac{1}{2}_p. \label{eq:inter1_03}
\end{align}
Combining \eqref{eq:inter1_01} and \eqref{eq:inter1_03} with \eqref{eq:inter1_02}
we obtain
\[
\|aD^2u\|_p\leq C
\|aD^2u\|^\frac{1}{4}_p\|u\|^\frac{1}{4}_p
\|a^2D^4u\|^\frac{1}{4}_p\|aD^2u\|_p^\frac{1}{4},
\]
from which we have
\begin{equation}\label{eq:inter1_04}
\|aD^2u\|_p\leq C\|a^2D^4u\|_p^\frac{1}{2} \|u\|_p^\frac{1}{2}
\end{equation} 
that leads to \eqref{eq:interp00-02}.
Combining now \eqref{eq:inter1_04} with \eqref{eq:inter1_01} 
and  applying Young's inequality we  have
\[
\|a^{\frac{1}{2}}Du\|_p\leq C
\|a^2D^4u\|_p^\frac{1}{4} \|u\|_p^\frac{3}{4}
\leq  \varepsilon^4\|a^2D^4u\|_p+\frac{C}{\varepsilon^\frac{4}{3}}\|u\|_p
\]
from which \eqref{eq:interp00-01} follows with $\varepsilon'=\varepsilon^{\frac43}$.

Combining \eqref{eq:inter1_04} with \eqref{eq:inter1_03}  and 
applying Young's inequality we have
\[
\|a^{\frac{3}{2}}D^3u\|_p\leq C
\|a^2D^4u\|_p^\frac{3}{4} \|u\|_p^\frac{1}{4}
\leq  \varepsilon^{\frac{4}{3}}\|a^2D^4u\|_p+\frac{C}{\varepsilon^4}\|u\|_p
\]
from which \eqref{eq:interp00-03} follows with $\varepsilon'=\varepsilon^{\frac43}$.

Finally, in order to prove \eqref{eq:calderon-zigmund}, we  fix $x_0\in \R^N$, $R=\rho(x_0)$ and $\vartheta\in C_c^\infty(\R^N)$ as before.  We have
\begin{align*}
\|a^2D^4u\|_{p,R}&\leq Ca(x_0)^2\|D^4(u\vartheta)\|_{p,2R}
	\\
& \leq 
	Ca(x_0)^2\|\Delta^2(u\vartheta)\|_{p,2R}\\
& \leq 
	Ca(x_0)^2\|\Delta^2 u \|_{p,2R}
	+C\Bigg(\frac{a(x_0)^2}{R}\|D^3u\|_{p,2R}
	\\
&\quad+ \frac{a(x_0)^2}{R^2}\|D^2u\|_{p,2R}	
	+  \frac{a (x_0)^2}{R^3}\|Du\|_{p,2R}
	+  \frac{a (x_0)^2}{R^4}\|u\|_{p,2R}
	\Bigg)\\
&\leq C\left (
	\|a^2\Delta ^2u \|_{p,2R}
	+\|a^{\frac{3}{2}}D ^3u\|_{p,2R}+\|aD ^2u\|_{p,2R}
	+\|a^{\frac{1}{2}}Du\|_{p,2R}
	+\|u\|_{p,2R}
	\right ).
 \end{align*}
By the above covering argument one has
\begin{align*}
\|a^2D^4u\|_p &\leq C\left (
	\|a^2\Delta ^2u \|_{p}
	+\|a^{\frac{3}{2}}D ^3u\|_{p}+\|aD ^2u\|_{p}
	+\|a^{\frac{1}{2}}Du\|_{p}
	+\|u\|_{p}
	\right )\\
&\leq 	C\left (
	\|a^2\Delta ^2u \|_{p} +\varepsilon \|a^2D^4u\|_p
	+\frac{C}{\varepsilon}\|u\|_p\right).
\end{align*}
Therefore, \eqref{eq:calderon-zigmund} holds.
\end{proof}

\begin{remark} \label{supp-compatto}
The inequalities proved in Lemma \ref{interpolation} remain true for functions in $W^{4,p}_{\rm comp}(\R^N)$.
\end{remark}

The following proposition is an immediate  consequence of the previous lemma and gives the announced a priori estimates.

\begin{proposition} \label{domain}
Let  $1<p<\infty$ and assume that $a$ satisfies (\ref{gradient}). Then, for $u\in D(A_p)$ the following inequality holds
$$\|a^\frac{1}{2}D u\|_p+\|a D^2u\|_p+\|a^\frac{3}{2}D^3 u\|_p+\|a^2D^4 u\|_p\leq C(\|Au\|_p+\|u\|_p)$$
where the  constant $C$ depends on $N,p$ and $c$. 
\end{proposition}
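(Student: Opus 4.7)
The plan is to first prove the inequality on the dense subspace $C_c^\infty(\R^N)$ by combining the four estimates of Lemma \ref{interpolation}, and then transfer it to the full domain $D(A_p)$ via Lemma 1.

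For $u\in C_c^\infty(\R^N)$, the starting point is \eqref{eq:calderon-zigmund}, which already provides
\[
\|a^2D^4u\|_p\leq C(\|Au\|_p+\|u\|_p).
\]
The three interpolation-type inequalities \eqref{eq:interp00-01}--\eqref{eq:interp00-03}, evaluated at (say) $\varepsilon=1$, then bound each of the intermediate norms $\|a^{1/2}Du\|_p$, $\|aD^2u\|_p$, $\|a^{3/2}D^3u\|_p$ by a constant multiple of $\|a^2D^4u\|_p+\|u\|_p$. Summing these bounds and using the estimate for $\|a^2D^4u\|_p$ above yields the desired inequality for $u\in C_c^\infty(\R^N)$.

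To pass to a general $u\in D(A_p)$, I would invoke Lemma 1: there exists a sequence $(u_n)\subset C_c^\infty(\R^N)$ with $u_n\to u$ in the graph norm $\|\cdot\|_{D(A_p)}$. By definition of this norm, the quantities $\|u_n\|_p$, $\|a^{1/2}Du_n\|_p$, $\|aD^2u_n\|_p$, $\|a^{3/2}D^3u_n\|_p$, $\|a^2D^4u_n\|_p$ converge to the corresponding quantities for $u$. Since $Au_n=-a^2\Delta^2 u_n$, one has $\|Au_n-Au\|_p\le C\|a^2D^4(u_n-u)\|_p\to 0$, so $Au_n\to Au$ in $L^p(\R^N)$. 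The inequality thus passes to the limit.

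There is really no main obstacle here: the statement is, as the authors note, an immediate corollary once Lemma \ref{interpolation} has been established. The only mildly delicate point is making sure the two-sided convergence works for the limit argument, but this is built into the definition of the graph norm used in the density Lemma 1. The constant $C$ produced depends only on $N$, $p$ and the constant $c$ in \eqref{gradient}, since these are the only parameters entering the estimates of Lemma \ref{interpolation}.
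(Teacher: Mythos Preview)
Your proposal is correct and matches the paper's approach: the authors explicitly state that the proposition is ``an immediate consequence of the previous lemma'' and give no further proof, so the combination of \eqref{eq:interp00-01}--\eqref{eq:calderon-zigmund} on $C_c^\infty(\R^N)$ followed by the density argument via Lemma~1 is exactly what is intended. Your verification that $Au_n\to Au$ in $L^p$ (needed to pass the right-hand side to the limit) is the one detail the paper leaves implicit, and you handle it correctly.
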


To prove the sectoriality estimate for the resolvent as in \cite[Theorem 2.1]{agm62}, we need a priori estimates also for the elliptic operator $A_1=-a^2(x)\Delta^2-e^{i\theta}D_t^{(4)}$, $\theta\in \left[-\frac{\pi}{2},\frac{\pi}{2}\right]$, in $L^p(\R^{N+1})$. 
\begin{proposition} \label{a-prioriAux}
Let  $1<p<\infty$ and assume that $a$ satisfies (\ref{gradient}). Then there exists a positive constant $C=C(N,p,c)$ such that
\begin{align*}
\sum_{k=1}^4 \|D^{(k)}_tu\|_{L^p(\R^{N+1})}&+\|a^\frac{1}{2}D u\|_{L^p(\R^{N+1})}+\|a D^2u\|_{L^p(\R^{N+1})}+\|a^\frac{3}{2}D^3 u\|_{L^p(\R^{N+1})}\\&+\|a^2D^4 u\|_{L^p(\R^{N+1})}\leq C\left(\|A_1u\|_{L^p(\R^{N+1})}+\|u\|_{L^p(\R^{N+1})}\right)
\end{align*}
for every $u\in C_c^\infty(\R^{N+1})$.
\end{proposition}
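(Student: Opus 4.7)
The plan is to extend the covering--freezing--interpolation scheme of Lemma \ref{interpolation} and Proposition \ref{domain} from $\R^N$ to $\R^{N+1}$, with $t$ treated as an extra coordinate on which the weight $a$ does not depend. Proposition \ref{pr:covering} applied to $\rho(x)=\frac{\eta}{2c}a(x)^{1/2}$, for a sufficiently small parameter $\eta\in(0,1]$ to be fixed later, induces the cylindrical covering $\{B_{\rho(x_n)}(x_n)\times\R\}$ of $\R^{N+1}$; a purely spatial cutoff $\vartheta=\vartheta(x)$ commutes with $D_t$, so the local freezing mechanism of Lemma \ref{interpolation} carries over with minimal change.

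The genuinely new ingredient is a $\theta$-uniform Calder\'on--Zygmund estimate for the frozen constant-coefficient operator $L_{x_0}=-a(x_0)^2\Delta^{2}-e^{i\theta}D_{t}^{(4)}$ on $\R^{N+1}$. After the anisotropic rescaling $y=a(x_0)^{-1/2}x$ it becomes $-\Delta_y^2-e^{i\theta}D_t^{(4)}$, whose Fourier symbol satisfies
\[
\big| |\xi|^4+e^{i\theta}\tau^4 \big|^2=|\xi|^8+2|\xi|^4\tau^4\cos\theta+\tau^8\geq|\xi|^8+\tau^8\geq\tfrac{1}{2}\bigl(|\xi|^4+\tau^4\bigr)^2
\]
uniformly in $\theta\in\left[-\frac{\pi}{2},\frac{\pi}{2}\right]$, so the operator is homogeneous elliptic of degree $4$ in $\R^{N+1}$ with ellipticity constants independent of $\theta$. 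The Mikhlin--H\"ormander theorem applied to the $0$-homogeneous multipliers $a(x_0)^2\xi^\alpha/(a(x_0)^2|\xi|^4+e^{i\theta}\tau^4)$ and $\tau^4/(a(x_0)^2|\xi|^4+e^{i\theta}\tau^4)$ therefore yields
\[
\|a(x_0)^2 D_x^4 v\|_{L^p(\R^{N+1})}+\|D_t^{(4)}v\|_{L^p(\R^{N+1})}\leq C\|L_{x_0}v\|_{L^p(\R^{N+1})}
\]
with $C=C(N,p)$ independent of $\theta$ and $x_0$.

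To pass from $L_{x_0}$ to $A_1$, one applies this estimate to $u\vartheta$, expands $L_{x_0}(u\vartheta)=\vartheta L_{x_0}u+[\text{commutators involving }D^k\vartheta]$, and uses the pointwise identity
\[
L_{x_0}u=\frac{a(x_0)^2}{a(x)^2}A_1u+e^{i\theta}D_t^{(4)}u\left(\frac{a(x_0)^2}{a(x)^2}-1\right).
\]
On $B_{2\rho(x_0)}(x_0)$ the assumption $|Da|\leq c\,a^{1/2}$ gives $|a(x_0)^2/a(x)^2-1|\leq C\eta$, so after summing over the covering one obtains
\[
\|a^2D^4u\|_p+\|D_t^{(4)}u\|_p\leq C\|A_1u\|_p+C\eta\|D_t^{(4)}u\|_p+C\cdot(\text{l.o.t.}),
\]
and choosing $\eta$ small absorbs the middle term into the left-hand side.

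The intermediate weighted spatial derivatives $a^{h/2}D^h u$, $h=1,2,3$, are handled by applying \eqref{eq:interp00-01}--\eqref{eq:interp00-03} slicewise in $t$ and integrating; the intermediate time derivatives $D_t^{(k)}u$, $k=1,2,3$, are controlled by the one-dimensional Gagliardo--Nirenberg inequality $\|D_t^{(k)}u(x,\cdot)\|_{L^p(\R)}\leq\varepsilon\|D_t^{(4)}u(x,\cdot)\|_{L^p(\R)}+C_\varepsilon\|u(x,\cdot)\|_{L^p(\R)}$, integrated in $x$ via Fubini, and combined with the top-order bound through Young's inequality. The main obstacle is the $\theta$-uniform Mikhlin constant at the endpoints $\theta=\pm\frac{\pi}{2}$, where the symbol becomes purely imaginary in $\tau$; the anisotropic ellipticity estimate displayed above is precisely what secures uniformity. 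Once the constant-coefficient bound is in place, the remainder is a direct transcription of the proof of Lemma \ref{interpolation} into the extra variable $t$, with the small-$\eta$ absorption trick handling the coefficient variation.
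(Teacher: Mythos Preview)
Your proposal is correct and follows essentially the same freezing--localization--covering--absorption scheme as the paper's proof; the paper simply cites the classical a priori estimate for $-\Delta^2-e^{i\theta}D_t^{(4)}$ from \cite[Theorem 3.2.1]{lun} rather than deriving it via Mikhlin--H\"ormander, and handles the coefficient variation additively (bounding $\|(a(x)^2-a(x_0)^2)\Delta^2 u\|$ by $C\eta\|a^2D^4u\|$) instead of multiplicatively as you do. Your explicit symbol calculation is a welcome clarification of the $\theta$-uniformity, which the paper leaves implicit in its citation.
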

\begin{proof} Let $u\in C_c^\infty(\R^{N+1})$, $\theta\in \left[-\frac{\pi}{2},\frac{\pi}{2}\right]$.
By the  classical homogeneous a priori estimate for the operator $-\Delta^2-e^{i\theta}D^{(4)}_t$, cf. \cite[Theorem 3.2.1]{lun},
\begin{align*}
\sum_{k=1}^4 \|D^{(k)}_tu\|_{L^p(\R^{N+1})}&+\|D u\|_{L^p(\R^{N+1})}+\|D^2u\|_{L^p(\R^{N+1})}+\|D^3 u\|_{L^p(\R^{N+1})}\\&+\|D^4 u\|_{L^p(\R^{N+1})}\leq C\left(\|(\Delta^2+e^{i\theta}D^{(4)}_t)u\|_{L^p(\R^{N+1})}+\|u\|_{L^p(\R^{N+1})}\right).
\end{align*}
Fix $x_0\in \R^N$. The last inequality applied to the function $v(x,t)=u(a(x_0)^\frac{1}{2}x,t)$ gives
\begin{align} \label{a-priori1}
\sum_{k=1}^4 &\|D^{(k)}_tu\|_{L^p(\R^{N+1})}+\|a(x_0)^\frac{1}{2}D u\|_{L^p(\R^{N+1})}+\|a(x_0)D^2u\|_{L^p(\R^{N+1})}+\|a(x_0)^\frac{3}{2}D^3 u\|_{L^p(\R^{N+1})}\\&+\|a(x_0)^2D^4 u\|_{L^p(\R^{N+1})}\leq C\left(\|(a(x_0)^2\Delta^2+e^{i\theta}D^{(4)}_t)u\|_{L^p(\R^{N+1})}+\|u\|_{L^p(\R^{N+1})}\right)\nonumber
\end{align}
with a constant $C$ independent of $x_0$.
Let now $\rho(x)=\frac{\eta}{2c}a(x)^\frac{1}{2}$, with  $0<\eta<1$ to be choosen later and set $R=\rho(x_0)$. One can choose as before $\vartheta\in C_c^{\infty }(\R^N)$ such that $0\leq \vartheta \leq 1$, $\vartheta(x)=1$ for $x\in B_R(x_0)$, 
$\vartheta(x)=0$ for $x\in  B^c_{2R}(x_0)$ and $|D^h\vartheta|\leq {C}{R^{-h}}$ for $h=1,2,3,4$.  By applying (\ref{a-priori1}) to $\vartheta u$  
\begin{align*} 
\sum_{k=1}^4& \|D^{(k)}_tu\|_{L^p(B_R(x_0)\times\R)}+\|a(x_0)^\frac{1}{2}D u\|_{L^p(B_R(x_0)\times\R)}+\|a(x_0)D^2u\|_{L^p(B_R(x_0)\times\R)}\\&\quad+\|a(x_0)^\frac{3}{2}D^3 u\|_{L^p(B_R(x_0)\times\R)}+\|a(x_0)^2D^4 u\|_{L^p(B_R(x_0)\times\R)}\\&\leq C\left(\|(a(x_0)^2\Delta^2+e^{i\theta}D^{(4)}_t)(\vartheta u)\|_{L^p(\R^{N+1})}+\|\vartheta u\|_{L^p(\R^{N+1})}\right)\\&\leq 
C\left(\|(a(x_0)^2\Delta^2+e^{i\theta}D^{(4)}_t)(\vartheta u)\|_{L^p(B_{2R}(x_0)\times\R)}+\|\vartheta u\|_{L^p(B_{2R}(x_0)\times\R)}\right)
\\&\leq 
C\Big(\|(a(x)^2\Delta^2+e^{i\theta}D^{(4)}_t)(u)\|_{L^p(B_{2R}(x_0)\times\R)}+\|(a(x)^2-a(x_0)^2)\Delta^2 u\|_{L^p(B_{2R}(x_0)\times\R)}\\&\quad+\|u\|_{L^p(B_{2R}(x_0)\times\R)}\Big)+\sum_{k=1}^4\|a(x_0)^2D^{k}\vartheta D^{4-k}u\|_{L^p(B_{2R}(x_0)\times\R)}
\\&\leq 
C\Big(\|(a(x)^2\Delta^2+e^{i\theta}D^{(4)}_t)(u)\|_{L^p(B_{2R}(x_0)\times\R)}+\|(a(x)^2-a(x_0)^2)\Delta^2 u\|_{L^p(B_{2R}(x_0)\times\R)}\\&\quad+\|u\|_{L^p(B_{2R}(x_0)\times\R)}+\sum_{k=1}^4\|a(x_0)^{2-\frac{k}{2}}D^{4-k}u\|_{L^p(B_{2R}(x_0)\times\R)}\Big).
\end{align*}
By the assumption \eqref{gradient} and recalling that $R=\frac{\eta}{2c}a(x_0)^\frac{1}{2}$, we have that, for $x\in B_{2R}(x_0)$,
$$\left(1-\frac{\eta}{2}\right)a(x_0)^\frac{1}{2}\leq a(x)^\frac{1}{2}\leq \left(1+\frac{\eta}{2}\right)a(x_0)^\frac{1}{2}$$ and
\begin{align*}
|a(x)^2-a(x_0)^2|&=(a(x)+a(x_0))|a(x)-a(x_0)|\leq \eta\left(1+\frac{\eta}{2}\right)(a(x)+a(x_0))a(x_0)\\&\leq C\eta a(x_0)^2 
\end{align*}
for some positive constant $C$. Therefore $$\|(a(x)^2-a(x_0)^2)\Delta^2 u\|_{L^p(B_{2R}(x_0)\times\R)}\leq C\eta \|a(x_0)^2D^4 u\|_{L^p(B_{2R}(x_0)\times\R)}.$$
We get
\begin{align} \label{eq:cover-x0}
\sum_{k=1}^4& \|D^{(k)}_tu\|_{L^p(B_R(x_0)\times\R)}+\|a^\frac{1}{2}D u\|_{L^p(B_R(x_0)\times\R)}+\|aD^2u\|_{L^p(B_R(x_0)\times\R)}\\&\quad+\|a^\frac{3}{2}D^3 u\|_{L^p(B_R(x_0)\times\R)}+\|a^2D^4 u\|_{L^p(B_R(x_0)\times\R)}\nonumber\\&\leq 
C\Big(\|(a^2\Delta^2+e^{i\theta}D^{(4)}_t)(u)\|_{L^p(B_{2R}(x_0)\times\R)}+\|u\|_{L^p(B_{2R}(x_0)\times\R)}\nonumber\\&\quad+\sum_{k=1}^4\|a^{2-\frac{k}{2}}D^{4-k}u\|_{L^p(B_{2R}(x_0)\times\R)}+\eta \|a^2D^4 u\|_{L^p(B_{2R}(x_0)\times\R)}\Big).\nonumber 
\end{align}
Let $\{B_{\rho(x_n )}(x_n)\}$ be a countable covering of $\R^N$ as in Proposition \ref{pr:covering} 
such that at most $\zeta $ among the double balls $\{ B_{2\rho(x_n )}(x_n)\}$ overlap. 
Writing \eqref{eq:cover-x0} with $x_n$ instead of $x_0$ and summing over $n$, it follows that
\begin{align*} 
\sum_{k=1}^4& \|D^{(k)}_tu\|_{L^p(\R^{N+1})}+\|a^\frac{1}{2}D u\|_{L^p(\R^{N+1})}+\|aD^2u\|_{L^p(\R^{N+1})}\\&\quad+\|a^\frac{3}{2}D^3 u\|_{L^p(\R^{N+1})}+\|a^2D^4 u\|_{L^p(\R^{N+1})}\\&\leq 
C\left(\|(a^2\Delta^2+e^{i\theta}D^{(4)}_t)(u)\|_{L^p(\R^{N+1})}+\|u\|_{L^p(\R^{N+1})}\right)\\&\quad+\sum_{k=1}^4\|a^{2-\frac{k}{2}}D^{4-k}u\|_{L^p(\R^{N+1})}+\eta \|a^2D^4 u\|_{L^p(\R^{N+1})}.
\end{align*}
By choosing $\eta$ small enough in the choice of $\rho$ and $\varepsilon$ small enough in the interpolative inequalities in  Lemma \ref{interpolation}, we get the claimed result.
\end{proof}

We are now able to characterize the domain of $A$ in $L^p$.

\begin{proposition} \label{domain2}
Let  $1<p<\infty$ and assume that $a$ satisfies (\ref{gradient}). Then 
$$D(A_{p})=D(A_{p,\rm max})=\{u\in W^{4,p}_{\rm loc}(\R^N)\cap L^p(\R^N):\   Au\in L^p(\R^N)\}.$$
\end{proposition}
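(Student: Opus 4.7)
The inclusion $D(A_p)\subseteq D(A_{p,\max})$ is immediate: if $u\in D(A_p)$, then $a^2 D^4 u\in L^p(\R^N)$, and since $|\Delta^2 u|\leq N|D^4 u|$ we get $Au=-a^2\Delta^2 u\in L^p(\R^N)$. For the reverse inclusion I take $u\in D(A_{p,\max})$ and reuse the localization already set up in Lemma~\ref{interpolation} and Proposition~\ref{a-prioriAux}: the function $\rho(x)=\tfrac{1}{2c}a(x)^{1/2}$, the countable covering $\{B_{R_n}(x_n)\}_{n\in\N}$ from Proposition~\ref{pr:covering} with $R_n=\rho(x_n)$, and the comparability $\tfrac12 a(x_n)\leq a(x)\leq\tfrac32 a(x_n)$ on $B_{2R_n}(x_n)$.

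The key analytical ingredient I would use is the classical scaled interior $L^p$-estimate for the bilaplacian: for every $u\in W^{4,p}_{\rm loc}(\R^N)$ with $\Delta^2 u\in L^p_{\rm loc}(\R^N)$, every ball $B_R(x_0)\subset\R^N$ and every $k\in\{0,1,2,3,4\}$,
\[
\|D^k u\|_{L^p(B_{R/2}(x_0))}\;\leq\; C\bigl(R^{-k}\|u\|_{L^p(B_R(x_0))}+R^{4-k}\|\Delta^2 u\|_{L^p(B_R(x_0))}\bigr),
\]
with $C=C(N,p,k)$ independent of $R$ and $x_0$. This is obtained from the Calder\'on--Zygmund bound $\|D^4 v\|_p\leq C\|\Delta^2 v\|_p$ already invoked in the proof of \eqref{eq:calderon-zigmund}, by the standard rescaling $v(y)=u(Ry)$ on the unit ball.

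I then apply this estimate at $x_0=x_n$ with $R=2R_n$. Since $a(x)\sim a(x_n)$ on $B_{2R_n}(x_n)$, one has $\|\Delta^2 u\|_{L^p(B_{2R_n})}\leq C a(x_n)^{-2}\|Au\|_{L^p(B_{2R_n})}$, and the choice $R_n=\tfrac{1}{2c}a(x_n)^{1/2}$ makes the two terms on the right-hand side, multiplied by $a(x_n)^{k/2}\sim a(x)^{k/2}$, homogeneous of the correct order; the outcome is, uniformly in $n$ and for $k=1,2,3,4$,
\[
\|a^{k/2}D^k u\|_{L^p(B_{R_n}(x_n))}\;\leq\; C\bigl(\|u\|_{L^p(B_{2R_n}(x_n))}+\|Au\|_{L^p(B_{2R_n}(x_n))}\bigr).
\]
Raising to the $p$-th power and summing over $n$, the left-hand side controls $\|a^{k/2}D^k u\|_p^p$ because $\bigcup_n B_{R_n}(x_n)=\R^N$, while the right-hand side is bounded by a constant multiple of $\|u\|_p^p+\|Au\|_p^p$ thanks to the bounded overlap of $\{B_{2R_n}(x_n)\}$ in Proposition~\ref{pr:covering}. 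This gives $\|a^{k/2}D^k u\|_p\leq C(\|u\|_p+\|Au\|_p)<\infty$ for $k=1,2,3,4$, which is exactly the assertion $u\in D(A_p)$.

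The main delicate step is producing the scaled interior biharmonic estimate with the precise $R$-dependence required to match, after multiplication by $a(x_n)^{k/2}$, the weights $a^{k/2}$ prescribed by the definition of $D(A_p)$; this is exactly why the radius $R_n\sim a(x_n)^{1/2}$ is the right one. An alternative route that stays closer to the calculations of Lemma~\ref{interpolation} would be to apply \eqref{eq:calderon-zigmund} and the interpolation inequalities directly to $u\vartheta_n\in W^{4,p}_{\rm comp}$ and then absorb the lower-order commutators in the Leibniz expansion of $\Delta^2(u\vartheta_n)$ by a small-parameter argument, but the direct scaled interior estimate avoids that circular absorption entirely.
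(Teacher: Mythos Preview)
Your argument is correct and reaches the same conclusion, but by a genuinely different route than the paper.

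The paper does \emph{not} invoke an off-the-shelf interior biharmonic estimate. Instead it works entirely with the weighted inequalities already established in Lemma~\ref{interpolation} (extended to $W^{4,p}_{\rm comp}$ via Remark~\ref{supp-compatto}): it applies them to $\vartheta u$ with a cutoff supported in $B_{\sigma' R}(x_0)$, expands $\Delta^2(\vartheta u)$ by Leibniz, and is left with lower-order terms of the form $(1-\sigma')^{-(4-h)}\|a^{h/2}D^h u\|_{p,\sigma' R}$. To close the resulting implicit inequality it uses the classical trick of introducing the quantity $\Phi(u)=\sup_{0<\sigma<1}\sum_{h=1}^3 (1-\sigma)^h\|a^{h/2}D^h u\|_{p,\sigma R}$ and showing $\Phi(u)\le \tfrac12\Phi(u)+C(\|Au\|_{p,R}+\|u\|_{p,R})$, after which the covering argument finishes as in your proof.

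Your approach is shorter because the absorption step that the paper carries out by hand is precisely what is hidden inside the ``classical scaled interior $L^p$-estimate for the bilaplacian'' that you quote. In other words, the $\sup_\sigma$ trick that the paper performs in the weighted setting is the standard proof of the very estimate you invoke, specialized to $a\equiv 1$ on the unit ball. So your route trades a self-contained argument for a citation of a standard fact; what it buys is brevity and the avoidance of the bookkeeping with $\sigma$ and $\sigma'$. The paper's route stays internal to the tools already built (only Lemma~\ref{interpolation} and the covering), which keeps the exposition self-contained. Either way, once the local estimate $\|a^{k/2}D^k u\|_{p,R_n/2}\le C(\|Au\|_{p,R_n}+\|u\|_{p,R_n})$ is in hand, the summation over the covering is identical.

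One minor remark: your sentence ``this is obtained from the Calder\'on--Zygmund bound \ldots\ by the standard rescaling'' understates what is needed. The rescaling is trivial; the substantive content is the unit-ball interior estimate itself, whose proof requires exactly the cutoff-and-absorb manoeuvre that you (rightly) note at the end would be ``circular'' if attempted directly in the weighted setting. Citing it as classical is fine, but it is not a consequence of the global Calder\'on--Zygmund inequality alone.
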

\begin{proof}
We have only to show the inclusion $D(A_{p,\rm max})\subset D(A_{p})$. 
As before, we fix $x_{0}\in \R^{N}$ and set $R=\rho (x_{0})=\frac{1}{2c}a^{\frac{1}{2}}(x_{0})$. 
Take now $\sigma\in (0,1)$ and set $\sigma':=\frac{\sigma+1}{2}$.
Consider a cutoff function $\vartheta\in \cc$ such that $0\leq\vartheta\leq 1$,
$\vartheta=1$ for $x\in B_{\sigma R}(x_{0})$, $\vartheta(x)=0$ for 
$x\in B^{c}_{\sigma'R}(x_{0})$ and $\|D^{h}\vartheta\|_{\infty}\leq {C}{R^{-h}(1-\sigma')^{-h}}$ for $h=0,1,2,3,4$.
Let $u\in W_{\rm loc}^{4,p}(\R^{N})$. Recall that by Remark \ref{supp-compatto}, the inequalites proved in Lemma \ref{interpolation} hold  for $\vartheta u\in W_{\rm comp}^{4,p}(\R^{N})$, therefore there exists a positive constant $C$ only depending on $N,p$ and $c$ such that for $h=1,2,3$ the following inequalities hold
\[
\|a^{\frac{h}{2}}D^h(\vartheta u)\|_{p}\leq \varepsilon ^{4-h}\|a^{2}\Delta^{2}(\vartheta u)\|_{p}+\frac{C}{\varepsilon^h}\|(\vartheta u)\|_{p},
\]
and for $h=4$
\[
\|a^{2}D^4(\vartheta u)\|_{p}\leq C\left( \|a^{2}\Delta^{2}(\vartheta u)\|_{p}+\|(\vartheta u)\|_{p}\right).
\]
It follows that for $h=1,2,3$ we have
\begin{align*}
 \|a^{\frac{h}{2}}D^hu\|_{p,\sigma R} &\leq\varepsilon^{4-h} \|a^{2}\Delta^{2}(u\vartheta)\|_{p,\sigma'R}+\frac{C}{\varepsilon^h}\|u\|_{p,R}\\
& \leq \varepsilon^{4-h} \|Au\|_{p,R}+\varepsilon^{4-h} Ca(x_{0})^{2}
\big( \|D^{3}u D\vartheta\|_{p,\sigma'R}+\|D^{2}uD^{2}\vartheta\|_{p,\sigma'R}\\&\quad+\|DuD^{3}\vartheta\|_{p,\sigma'R}
	+\|uD^{4}\vartheta\|_{p,\sigma'R}  \big)+\frac{C}{\varepsilon^h}\|u\|_{p,R}\\
& \leq \varepsilon^{4-h} \|Au\|_{p,R}+\varepsilon^{4-h} Ca(x_{0})^{2}
\left( \frac{1}{R(1-\sigma')}\|D^{3}u \|_{p,\sigma'R}+\frac{1}{R^{2}(1-\sigma')^{2}}\|D^{2}u\|_{p,\sigma'R}\right.\\
&\quad \left.
	+\frac{1}{R^{3}(1-\sigma')^{3}}\|Du\|_{p,\sigma'R}
	+\frac{1}{R^{4}(1-\sigma')^{4}}\|u\|_{p,\sigma'R}  \right) +\frac{C}{\varepsilon^h}\|u\|_{p,R}\\
& \leq \varepsilon^{4-h} \|Au\|_{p,R}+\varepsilon^{4-h} C
\left( \frac{a(x_{0})^{\frac{3}{2}} }{1-\sigma'}\|D^{3}u \|_{p,\sigma'R}+\frac{a(x_{0}) }{(1-\sigma')^{2}}\|D^{2}u\|_{p,\sigma'R}\right.\\
&\quad \left.
	+\frac{a(x_{0})^{\frac{1}{2}} }{(1-\sigma')^{3}}\|Du\|_{p,\sigma'R}
	+\frac{1}{(1-\sigma')^{4}}\|u\|_{p,\sigma'R}  \right)+\frac{C}{\varepsilon^h}\|u\|_{p,R}\\	
& \leq \varepsilon^{4-h} \|Au\|_{p,R}+\varepsilon^{4-h} C
\left( \frac{1}{1-\sigma'}\|a^{\frac{3}{2}}D^{3}u \|_{p,\sigma'R}+\frac{1 }{(1-\sigma')^{2}}\|aD^{2}u\|_{p,\sigma'R}\right.\\
&\quad \left.
	+\frac{1 }{(1-\sigma')^{3}}\|a^{\frac{1}{2}}Du\|_{p,\sigma'R}
	+\frac{1}{(1-\sigma')^{4}}\|u\|_{p,\sigma'R}  \right)+\frac{C}{\varepsilon^h}\|u\|_{p,R}.			
\end{align*}

For $h=4$ we obtain
\begin{align}
 \|a^{2}D^{4}u\|_{p,\sigma R }&\leq C\left(  \|Au\|_{p,R}+
 \frac{1}{1-\sigma'}\|a^{\frac{3}{2}}D^{3}u \|_{p,\sigma'R}+\frac{1 }{(1-\sigma')^{2}}\|aD^{2}u\|_{p,\sigma'R}\right.\nonumber \\
&\quad \left.
	+\frac{1 }{(1-\sigma')^{3}}\|a^{\frac{1}{2}}Du\|_{p,\sigma'R}
	+\frac{1}{(1-\sigma')^{4}}\|u\|_{p,\sigma'R} +\|u\|_{p,R} \right).\label{eq:quartordine}
\end{align}

Set $\varepsilon=\frac{1}{(2^h6C)^{\frac{1}{4-h}}}(1-\sigma')$, then one can find a suitable positive $C_1$ such that 
for $h=1,2,3$
\begin{align*}
\|a^{\frac{h}{2}}D^hu\|_{p,\sigma R}&\leq C_1(1-\sigma')^{4-h} \|Au\|_{p,R}+\frac{1}{2^h6}
\left( (1-\sigma')^{3-h}\|a^{\frac{3}{2}}D^{3}u \|_{p,\sigma'R}+(1-\sigma')^{2-h}\|aD^{2}u\|_{p,\sigma'R}\right.\\
&\qquad \left.
	+(1-\sigma')^{1-h}\|a^{\frac{1}{2}}Du\|_{p,\sigma'R}
	+(1-\sigma')^{-h}\|u\|_{p,\sigma'R}  \right)+\frac{C_1}{(1-\sigma')^h}\|u\|_{p}.\end{align*}
Multiply by $(1-\sigma')^h$, taking into account that $1-\sigma'=\frac{1-\sigma}{2}$, it follows that
\begin{align*}\label{eq:interp.01}
(1-\sigma)^h\|a^{\frac{h}{2}}D^hu\|_{p,\sigma R}&\leq C \|Au\|_{p,R}+\frac{1}{6}
\left( (1-\sigma')^{3}\|a^{\frac{3}{2}}D^{3}u \|_{p,\sigma'R}+(1-\sigma')^{2}\|aD^{2}u\|_{p,\sigma'R}\right.\nonumber \\
&\qquad \left.
	+(1-\sigma')\|a^{\frac{1}{2}}Du\|_{p,\sigma'R}
	+\|u\|_{p,\sigma'R}  \right)+C\|u\|_{p}.
\end{align*}

Now, summing for $h=1,2,3$ one has
\begin{align*}
&
(1-\sigma)^{3}\|a^{\frac{3}{2}}D^{3}u \|_{p,\sigma R}+(1-\sigma)^{2}\|aD^{2}u\|_{p,\sigma R}+(1-\sigma)\|a^{\frac{1}{2}}Du\|_{p,\sigma R}
 \leq C \|Au\|_{p,R}+C\|u\|_{p}\\
&\qquad +\frac{1}{2}\left( (1-\sigma')^{3}\|a^{\frac{3}{2}}D^{3}u \|_{p,\sigma'R}+(1-\sigma')^{2}\|aD^{2}u\|_{p,\sigma'R}
	+(1-\sigma')\|a^{\frac{1}{2}}Du\|_{p,\sigma'R}  \right).
\end{align*}
Set $\Phi(u)=\sup_{\sigma\in (0,1)}\left\{(1-\sigma)^{3}\|a^{\frac{3}{2}}D^{3}u \|_{p,\sigma R}+(1-\sigma)^{2}\|aD^{2}u\|_{p,\sigma R}+(1-\sigma)\|a^{\frac{1}{2}}Du\|_{p,\sigma R} \right\}$
then the above inequality reads as 
\[
\Phi(u)\leq \frac{1}{2}\Phi(u)+C\left( \|Au\|_{p,R}+\|u\|_{p}\right)
\]
and then
\begin{align*}
&\frac{1}{8}\left(
	\|a^{\frac{3}{2}}D^{3}u \|_{p,\frac{1}{2}R}+\| aD^{2}u\|_{p,\frac{1}{2} R}+\|a^{\frac{1}{2}}Du\|_{p,\frac{1}{2} R}
	\right) \\
&\quad \leq \frac{1}{8}\|a^{\frac{3}{2}}D^{3}u \|_{p,\frac{1}{2}R}+\frac 14 \|aD^{2}u\|_{p,\frac{1}{2} R}+\frac 12\|a^{\frac{1}{2}}Du\|_{p,\frac{1}{2} R} \\
&\quad \leq \Phi (u)\leq  C \|Au\|_{p,R}+C\|u\|_{p}.
\end{align*}
Therefore, for all $u \in W_{\rm loc}^{4,p}(\R^{N})$ we have
\begin{equation}\label{eq:cover-max}
\|a^{\frac{3}{2}}D^{3}u \|_{p,\frac{1}{2}R}+\| aD^{2}u\|_{p,\frac{1}{2} R}+\|a^{\frac{1}{2}}Du\|_{p,\frac{1}{2} R}
\leq C\left( \|Au\|_{p,R}+\|u\|_{p,R} \right)
\end{equation}
and by \eqref{eq:quartordine}
\begin{equation}\label{eq:cover-max2}
\|a^{2}D^{4}u\|_{p,\frac 12 R} \leq 
C\left (\|Au\|_{p,R}+  \|a^{\frac{3}{2}}D^{3}u \|_{p,R}+\| aD^{2}u\|_{p, R}+\|a^{\frac{1}{2}}Du\|_{p,R}
+\|u\|_{p,R} \right).
\end{equation}

Now let $u\in W_{\rm loc}^{4,p}(\R^{N})\cap L^{p}(\R^{N})$ such that $Au\in L^{p}(\R^{N})$ and
let $\{B_{\frac{1}{2}\rho(x_n )}(x_n)\}$ be a countable covering of $\R^N$ as in Proposition \ref{pr:covering}
such that at most $\zeta $ among the double balls $\{B_{\rho(x_n )}(x_n)\}$ overlap.

We write \eqref{eq:cover-max} with $x_0$ replaced by $x_n$ and summing over $n$ we obtain
\[
\|a^{\frac{3}{2}}D^{3}u \|_{p}+\| aD^{2}u\|_{p}+\|a^{\frac{1}{2}}Du\|_{p}
 \leq C\left( \|Au\|_{p}+\|u\|_{p} \right).
\]
Writing \eqref{eq:cover-max2} with $x_0$ replaced by $x_n$ and summing over $n$ we obtain
\begin{align*}
 \|a^{2}D^{4}u\|_{p}&\leq 
C\left(\|Au\|_{p}+
\|a^{\frac{3}{2}}D^{3}u \|_{p}+\| aD^{2}u\|_{p}+\|a^{\frac{1}{2}}Du\|_{p}+\|u\|_{p}
\right)	
\\
&  \leq C\left( \|Au\|_{p}+\|u\|_{p} \right)
\end{align*}
and therefore
\[\|u\|_{D(A_p)}\leq C(\|Au\|_p+\|u\|_p).\]

\end{proof}

\section{Generation results in $L^p(\R^N)$}
In this section we investigate the generation of an analytic semigroup in $L^p(\R^N)$ for $1<p<\infty$.
Following Agmon, we first prove an a priori estimate for the resolvent.

\begin{theorem} \label{agmon-comp}
Let  $1<p<\infty$ and assume that $a$ satisfies (\ref{gradient}). Then, if $\lambda=re^{i\theta}$ for some $r>r_{p}$ and $\theta\in [-\frac{\pi}{2},\frac{\pi}{2}]$, the following inequality holds for every $u\in D(A_p)$
$$|\lambda|\|u\|_p\leq C\|\lambda u-Au\|_p$$
where $C=C(N,p,c)$.
 \end{theorem}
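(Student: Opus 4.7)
The plan is to invoke Agmon's classical trick, which reduces the resolvent estimate for $A$ to the \emph{elliptic} estimate for the augmented operator $A_1 = -a^2\Delta^2 - e^{i\theta} D_t^{(4)}$ already established in Proposition \ref{a-prioriAux}. Since Lemma 1 yields density of $C_c^\infty(\R^N)$ in $D(A_p)$, it suffices to prove the inequality for $u \in C_c^\infty(\R^N)$ and then extend by continuity of both sides in the relevant norms.

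Fix $\lambda = re^{i\theta}$ with $r>0$, $\theta \in [-\frac{\pi}{2},\frac{\pi}{2}]$, and set $\mu := r^{1/4} > 0$, so that $e^{i\theta}\mu^4 = \lambda$ and consequently $e^{i\theta}D_t^{(4)}(e^{i\mu t}) = \lambda e^{i\mu t}$. Choose once and for all a nontrivial cutoff $\zeta \in C_c^\infty(\R)$, and consider the tensor-product test function $v(x,t) := u(x)\zeta(t) e^{i\mu t}$, which lies in $C_c^\infty(\R^{N+1})$ and is therefore admissible in Proposition \ref{a-prioriAux}. A direct Leibniz expansion in $t$ gives
\begin{align*}
A_1 v &= -(\lambda u - Au)\,\zeta(t) e^{i\mu t} \;-\; e^{i\theta}\,u(x)\,e^{i\mu t}\, R_\mu(\zeta)(t),\\
D_t^{(4)} v &= u(x)\,e^{i\mu t}\,\bigl[\mu^4 \zeta(t) + R_\mu(\zeta)(t)\bigr],
\end{align*}
where $R_\mu(\zeta)(t) := -4i\mu^3\zeta'(t) - 6\mu^2\zeta''(t) + 4i\mu\zeta'''(t) + \zeta^{(4)}(t)$ collects all terms involving derivatives of $\zeta$ of order at least one, and satisfies $\|R_\mu(\zeta)\|_{L^p(\R)} \leq C_\zeta\,\mu^3$ for $\mu \geq 1$ with $C_\zeta$ depending only on $\zeta$.

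Since $|e^{i\mu t}|=1$, the $L^p(\R^{N+1})$-norms factor by Fubini as products $\|u\|_{L^p(\R^N)}\|\cdot\|_{L^p(\R)}$. Thus
\[
\|D_t^{(4)} v\|_p \geq \mu^4\|u\|_p\|\zeta\|_p - C_\zeta\mu^3\|u\|_p, \qquad \|A_1 v\|_p \leq \|\zeta\|_p\|\lambda u - Au\|_p + C_\zeta\mu^3\|u\|_p,
\]
and $\|v\|_p = \|u\|_p\|\zeta\|_p$. Feeding these three bounds into the consequence $\|D_t^{(4)} v\|_p \leq C(\|A_1 v\|_p + \|v\|_p)$ of Proposition \ref{a-prioriAux} and dividing by the positive constant $\|\zeta\|_p$ yields
\[
\mu^4\|u\|_p \leq C\|\lambda u - Au\|_p + C'\mu^3\|u\|_p,
\]
with constants independent of $\lambda$ and $u$. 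Choosing $r_p$ so that $C'\mu^3 \leq \tfrac{1}{2}\mu^4$ whenever $\mu^4 = r > r_p$ absorbs the subleading term on the left, producing $|\lambda|\|u\|_p \leq 2C\|\lambda u - Au\|_p$. Density extends the inequality from $C_c^\infty(\R^N)$ to $D(A_p)$. The only delicate point is the Leibniz bookkeeping of the second step: the identity $e^{i\theta}\mu^4 = \lambda$ is engineered precisely so that the leading $\mu^4$-contribution of $D_t^{(4)}v$ produces the $\lambda u$ piece of $A_1v$, while every remaining term carries at most $\mu^3$ in the exponent of $\mu$ and is therefore harmless for $|\lambda|$ large.
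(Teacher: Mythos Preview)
Your proof is correct and follows essentially the same Agmon trick as the paper: tensor $u$ with a compactly supported oscillating factor in $t$, apply the $(N+1)$-dimensional a priori estimate of Proposition~\ref{a-prioriAux}, and absorb the lower-order $\mu^{3}$ terms for $|\lambda|$ large. The only cosmetic difference is that the paper takes the cutoff $\psi$ equal to $1$ on $[-\tfrac12,\tfrac12]$ and restricts the left-hand side to that slab (obtaining the exact identity $\sum_{k=1}^{4}\|D_t^{(k)}(e^{irt}u)\|_{L^p(\R^N\times[-1/2,1/2])}=\sum_{k=1}^{4}r^{k}\|u\|_p$), whereas you keep a generic cutoff and use the reverse triangle inequality; both routes produce the same $\mu^{4}$ leading term versus $O(\mu^{3})$ remainder.
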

\begin{proof}
Let $u\in C_{c}^{\infty}(\R^{N})$ and set $v(t,x)=\psi (t)e^{ir t}u(x)$ where $r>0$ and $\psi\in C_c^\infty(\R)$ is such that $0\leq\psi\leq1,\psi(t)=1$ for $t\in \left[-\frac{1}{2},\frac{1}{2}\right],\ \psi(t)=0$ for $|t|\geq 1$ and $|D^h\psi|\leq {C} ,h=1,2,3,4$.
By Proposition \ref{a-prioriAux} 
\begin{align*}
\sum_{k=1}^4 \|D^{(k)}_tv\|_{L^p(\R^{N+1})}&
	+\|a^\frac{1}{2}D v\|_{L^p(\R^{N+1})}+\|a D^2v\|_{L^p(\R^{N+1})}
	+\|a^\frac{3}{2}D^3 v\|_{L^p(\R^{N+1})}\\&+\|a^2D^4 v\|_{L^p(\R^{N+1})}
		\leq C(\|A_1v\|_{L^p(\R^{N+1})}+\|v\|_{L^p(\R^{N+1})}).
\end{align*}
Observe  that 
$$\|A_1 v\|_{L^p(\R^{N+1})}\leq\|-a^{2}\Delta ^2 u-e^{i\theta}r^4u\|_{L^p(\R^{N})}+C\sum_{k=0}^3r^{k}\|u\|_{L^p(\R^{N})}.$$
Therefore
\begin{align*}
\sum_{k=1}^4 &\|D^{(k)}_te^{irt}u\|_{L^p\left(\R^{N}\times\left[-\frac{1}{2},\frac{1}{2}\right]\right)}
	+\|a^\frac{1}{2}e^{irt}D u\|_{L^p\left(\R^{N}\times\left[-\frac{1}{2},\frac{1}{2}\right]\right)}\\
	&
	+\|a e^{irt}D^2u\|_{L^p\left(\R^{N}\times\left[-\frac{1}{2},\frac{1}{2}\right]\right)}
	+\|a^\frac{3}{2}e^{irt}D^3 u\|_{L^p\left(\R^{N}\times\left[-\frac{1}{2},\frac{1}{2}\right]\right)}\\
	&+\|a^2e^{irt}D^4 u\|_{L^p\left(\R^{N}\times\left[-\frac{1}{2},\frac{1}{2}\right]\right)}
	\leq C(\|-a^{2}\Delta ^2 u-r^4e^{i\theta}u\|_{L^p(\R^{N})}+\sum_{k=0}^3r^{k}\|u\|_{L^p(\R^{N})}).
\end{align*}
On the other hand
\begin{align*}
\sum_{k=1}^4 &\|D^{(k)}_te^{irt}u\|_{L^p\left(\R^{N}\times\left[-\frac{1}{2},\frac{1}{2}\right]\right)}= \sum_{k=1}^4 \|r^ku\|_{L^p\left(\R^{N}\right)}
\end{align*}
and then
\[
\left( r^{4}+(1-C)\sum_{k=0}^{3}r^{k} \right)\|u\|_{L^p\left(\R^{N}\right)}\leq C(\|-a^{2}\Delta ^2 u-r^4e^{i\theta}u\|_{L^p(\R^{N})}.
\]
Since 
$r^4-C\sum_{k=0}^3r^{k}\geq \frac{r^4}{2}$ for $r$ large enough, it follows that
$$r^4\|u\|_{L^p\left(\R^{N}\right)}\leq C\|-a^{2}\Delta ^2 u-r^4e^{i\theta}u\|_{L^p(\R^{N})}.$$ 
Setting $\lambda=r^{4}e^{i\theta}$ we have
$$|\lambda| \|u\|_{L^p\left(\R^{N}\right)}\leq C\|-a^{2}\Delta ^2 u-\lambda u\|_{L^p(\R^{N})}.$$
\end{proof}

The previous estimate leads to the solvability of the elliptic problem. 

\begin{theorem}\label{solv}
Let  $1<p<\infty$ and assume that $a$ satisfies (\ref{gradient}). Then there exists $r_p>0$ such that  the equation $\lambda u-Au=f$ is uniquely solvable in $D(A_p)$ for $f\in L^p(\R^N)$ and  $\Rp\lambda>r_p$.  
\end{theorem}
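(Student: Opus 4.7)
Uniqueness is immediate from Theorem \ref{agmon-comp}: if $u\in D(A_p)$ satisfies $\lambda u-Au=0$ with $\Rp\lambda>r_p$, the resolvent estimate gives $|\lambda|\|u\|_p\le 0$ and hence $u=0$. So the task reduces to existence for $\Rp\lambda>r_p$, where $r_p$ is the threshold furnished by Theorem \ref{agmon-comp}.

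For existence, the plan is to approximate $A$ by fourth-order operators with bounded coefficients, for which solvability follows from classical higher-order $L^p$ semigroup theory, and then pass to the limit using the uniform a priori bounds. Concretely, I would construct a sequence $(a_n)\subset C^1(\R^N)$ of strictly positive functions such that $a_n\equiv a$ on $B_n$, $a_n$ is bounded above by a (possibly $n$-dependent) constant, and each $a_n$ satisfies \eqref{gradient} with one and the same constant $c$. A natural choice is $a_n:=\phi_n\circ a$ with $\phi_n\in C^1([0,\infty))$ nondecreasing, $\phi_n(s)=s$ for $s\le n$, and $\phi_n$ constant for $s$ large: then $|Da_n|=\phi_n'(a)|Da|\le c\,\phi_n'(a)\,a^{1/2}\le c\,a_n^{1/2}$ once $\phi_n$ is suitably smoothed.

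For each $n$, $A_n:=-a_n^2\Delta^2$ is a uniformly elliptic fourth-order operator with bounded $C^1$ coefficients, and hence by classical results (cf.~\cite{dav95a}) generates an analytic semigroup on $L^p(\R^N)$; in particular $\lambda u_n-A_nu_n=f$ admits a unique $u_n\in W^{4,p}(\R^N)$ for $\Rp\lambda$ large. The key point is that the constants and the threshold $r_p$ in Theorem \ref{agmon-comp} and Proposition \ref{domain} depend on the coefficient only through $c$ in \eqref{gradient}; applied to $A_n$, they yield
\[
|\lambda|\|u_n\|_p+\|a_n^{1/2}Du_n\|_p+\|a_nD^2u_n\|_p+\|a_n^{3/2}D^3u_n\|_p+\|a_n^2D^4u_n\|_p\le C\|f\|_p
\]
uniformly in $n$, for every $\Rp\lambda>r_p$.

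Since $a_n\equiv a$ on $B_n$ and $a$ is locally bounded, $(u_n)$ is uniformly bounded in $W^{4,p}(B_R)$ for every fixed $R$. A diagonal extraction then produces $u_{n_k}\rightharpoonup u$ weakly in $L^p(\R^N)$ and weakly in $W^{4,p}_{\mathrm{loc}}(\R^N)$; passing to the limit in $\lambda u_{n_k}+a^2\Delta^2u_{n_k}=f$ (which holds on $B_R$ as soon as $n_k\ge R$) yields $\lambda u-Au=f$ a.e., and weak lower semicontinuity together with the uniform bound places $u$ in $D(A_p)$. The main obstacle I anticipate is engineering the approximation $(a_n)$ so that \eqref{gradient} is preserved with \emph{exactly} the same constant $c$: the whole limiting scheme hinges on the a priori constants and the threshold $r_p$ being truly independent of $n$, and any drift of $c$ under the truncation would break the compactness argument used to recover a solution in $D(A_p)$.
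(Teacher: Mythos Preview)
Your strategy---approximate $A$ by operators with bounded coefficients satisfying \eqref{gradient} with a uniform constant, invoke the classical $L^p$ theory, and pass to the limit via the uniform a priori bounds of Theorem~\ref{agmon-comp} and Proposition~\ref{domain}---is exactly the paper's. The paper's concrete approximation is
\[
a_\sigma(x)=\frac{a(x)}{1+\sigma a(x)}+\sigma,
\]
which satisfies $\sigma\le a_\sigma\le\sigma+\sigma^{-1}$ and $|Da_\sigma|\le c\,a_\sigma^{1/2}$ with the \emph{same} $c$ as $a$, so the concern you flag at the end is resolved cleanly.

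Two gaps remain in your version. First, and more substantively: hypothesis \eqref{gradient} does not force $\inf_{\R^N}a>0$ (e.g.\ $a(x)=(1+|x|)^{-2}$ satisfies \eqref{gradient}), so your truncation $a_n=\phi_n\circ a$, which leaves $a$ untouched where $a\le n$, need not be bounded away from zero either. The classical bounded-coefficient theory you invoke (Davies, or \cite[Theorem~3.2.2]{lun} as in the paper) requires uniform ellipticity, i.e.\ $\inf a_n>0$; without it you cannot solve $\lambda u_n-A_nu_n=f$ in $W^{4,p}(\R^N)$. The paper's additive $+\sigma$ handles this; you could repair your scheme similarly (e.g.\ $a_n=\phi_n\circ a+1/n$), but you must then recheck \eqref{gradient}.

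Second, the classical theory furnishes solvability only for $\Rp\lambda>\omega_n$, where $\omega_n$ depends on the sup and inf of $a_n$ and may blow up with $n$. You then apply the uniform estimates for all $\Rp\lambda>r_p$, but for $r_p<\Rp\lambda\le\omega_n$ you have not yet produced $u_n$. The paper closes this with a resolvent continuation: for $\bar\lambda\in\rho(A_\sigma)$ with $\Rp\bar\lambda>r_p$, the uniform bound $\|R(\bar\lambda,A_\sigma)\|\le C_p/|\bar\lambda|\le C_p/r_p$ forces the disc $|\lambda-\bar\lambda|<r_p/C_p$ into $\rho(A_\sigma)$, and covering $\{\Rp\lambda>r_p\}$ by such discs (starting from the region $\Rp\lambda>\omega_\sigma$ where solvability is already known) shows the whole half-plane lies in $\rho(A_\sigma)$. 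Your write-up omits this step.
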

\begin{proof} Let $\sigma>0$ and define the function
$$a_\sigma(x)=\frac{a(x)}{1+\sigma a(x)}+\sigma,\quad x\in\R^N.$$ Observe that $\sigma\leq a_\sigma(x)\leq\sigma+\sigma^{-1}$, $|D a_\sigma|\leq c a_\sigma^\frac{1}{2}$ with  $c$ independent of $\sigma$. Consider the operators with bounded coefficients $A_\sigma=-(a_\sigma(x))^2\Delta^2$. 
By \cite[Theorem 3.2.2]{lun}, there exist $\omega_{\sigma,p}\in\R$, $M_{\sigma,p}>0$ such that $\{\lambda\in\C:\ \Rp \lambda\geq \omega_{\sigma,p}\}\subset\rho(A_{\sigma})$ and 
\begin{equation*} 
|\lambda|\|u\|_p\leq M_{\sigma,p}\|\lambda u-A_\sigma u\|_p
\end{equation*} for every $u\in W^{4,p}(\R^N))$  and $\lambda\in\C$ with $\Rp \lambda\geq \omega_{\sigma,p}$.
Morever, by applying Theorem  \ref{agmon-comp}  to $A_\sigma$, since the condition in (\ref{gradient}) is satisfied with a constant independent of $\sigma$, we have that there exists $r_p>0$  (independent of $\sigma$) such that  for every  $\lambda\in\C$ with $\Rp\lambda>r_p$ and for every $u\in C_c^\infty(\R^N)$, and then in $W^{4,p}(\R^N)$ by density, 
\begin{equation} \label{stima-ind}
|\lambda|\|u\|_p\leq C_p\|\lambda u-A_\sigma u\|_p
\end{equation}
for some constant $C_p$ independent of $\sigma$.  Let $\overline{\lambda}\in \rho (A_{\sigma })$ and $\Rp\overline{\lambda}>r_p$, then the inequality (\ref{stima-ind}) gives that
\begin{equation} \label{resolvent}
\|R(\overline{\lambda},A_\sigma)\|_p\leq \frac{C_p}{|\overline{\lambda}|}\leq \frac{C_p}{r_p}.
\end{equation}
By classical result, if $|\lambda-\overline{\lambda}|<\frac{1}{\|R(\overline{\lambda},A_\sigma)\|_p}$, then  $\lambda\in\rho (A_{\sigma })$. By (\ref{resolvent}), if
$|\lambda-\overline{\lambda}|<\frac{r_p}{C_p}$, then  $\lambda\in\rho (A_{\sigma })$.  
By covering with balls of radius $\frac{r_p}{C_p}$, it follows that, if $\Rp\lambda>r_p$, then $\Rp\lambda\in\rho(A_{\sigma })$.

Let $\Rp\lambda>r_p$, $f\in L^p(\R^N)$. Denote by $u_\sigma\in W^{4,p}(\R^N)$ the unique solution of $\lambda u-A_\sigma u=f$. Then $\{u_\sigma\}$ is bounded in $L^p(\R^N)$ and from $A_\sigma u_\sigma=\lambda u_\sigma -f$ we have that also $\{A_\sigma u_\sigma\}$ is bounded in $L^p(\R^N)$. Moreover, as in Proposition \ref{domain}, we have that
\begin{equation}\label{a-priori-sigma}\|a_\sigma^\frac{1}{2}D u_\sigma\|_p+\|a_\sigma D^2u_\sigma\|_p+\|a_\sigma^\frac{3}{2}D^3 u_\sigma\|_p+\|a_\sigma^2D^4 u_\sigma\|_p\leq C(\|A_\sigma u_\sigma\|_p+\|u_\sigma\|_p)\end{equation}  for some constant $C$ independent of $\sigma$. It follows that, since 
$$\frac{1}{a_\sigma}\leq \frac{1}{a}+\sigma\leq \frac{1}{a}+1$$ for $\sigma$ smaller than $1$ and $a$ is strictly positive in $\R^N$, for every positive $R$ there exists a positive constant $C_R$ independent of $\sigma$ such that
$$\|u_\sigma\|_{W^{4,p}(B_R)}\leq C_R(\|A_\sigma u_\sigma\|_{L^p(B_R)}+\|u_\sigma\|_{L^p(B_R)})\leq C_R\|f\|_{L^p(B_R)}.$$
 By using a compactness argument we can determine an infinitesimal sequence $\{\sigma_n\}$ and a function $u$ such that $u_{\sigma_n}$ converges weakly in $W^{4,p}(B_R)$ and strongly in $W^{3,p}(B_R)$ for every $R>0$. Moreover  $u_{\sigma_n}$, $D^{(k)}u_{\sigma_n}$ converge pointwise almost everywhere to $u$, $D^{(k)}u$ for every $1\leq k\leq 3$. By the equation, $a^2_{\sigma_n}\Delta^2u_{\sigma_n}$ converges to $a^2\Delta^2u$ locally in $L^p(\R^N)$. Letting $n$ to infinity  in (\ref{a-priori-sigma}) we deduce that
$$\|a^\frac{1}{2}D u\|_p+\|a D^2u\|_p+\|a^\frac{3}{2}D^3 u\|_p+\|a^2D^4 u\|_p\leq C\|f\|_p.$$
This proves that $u\in D(A_p)$ solves $\lambda u-Au=f$.
\end{proof}

In view of \cite[Proposition 3.2.8]{lor-rha-book} the operator $A$ is sectorial. Therefore, by standard generation results, cf. \cite[Ch. II, Thorem 4.6]{eng-nag}, we have proved the following generation theorem.
\begin{theorem}
Let  $1<p<\infty$ and assume that $a$ satisfies (\ref{gradient}). Then, $(A,D_p(A))$ generates an analytic $C_0$-semigroup in $L^p(\R^N)$.
\end{theorem}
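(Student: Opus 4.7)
The plan is to package Theorems \ref{agmon-comp} and \ref{solv} into sectoriality of $(A,D(A_p))$ and then invoke a standard characterization of generators of analytic semigroups. First I would record that, by Theorem \ref{solv}, the half-plane $\{\lambda\in\C:\Rp\lambda>r_p\}$ is contained in the resolvent set $\rho(A_p)$: for every such $\lambda$ and every $f\in L^p(\R^N)$, the equation $\lambda u-Au=f$ admits a unique solution $u\in D(A_p)$. Combining this existence statement with the resolvent estimate of Theorem \ref{agmon-comp} (which applies along every ray $\lambda=re^{i\theta}$ with $r>r_p$ and $\theta\in[-\pi/2,\pi/2]$, hence on the full half-plane $\Rp\lambda>r_p$) yields
\[
\|R(\lambda,A_p)\|_{\mathcal{L}(L^p(\R^N))}\leq\frac{C}{|\lambda|},\qquad \Rp\lambda>r_p,
\]
with $C=C(N,p,c)$ independent of $\lambda$.

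Next I would upgrade this half-plane bound to a genuine sectorial estimate by appealing to \cite[Proposition 3.2.8]{lor-rha-book}. The mechanism is the standard Neumann series argument: for $\lambda_0$ on the vertical line $\Rp\lambda_0=r_p+1$, the disk $\{|\mu-\lambda_0|<|\lambda_0|/(2C)\}$ lies in $\rho(A_p)$ with a comparable resolvent bound via the identity $R(\mu,A_p)=\sum_{k\geq 0}(\lambda_0-\mu)^{k}R(\lambda_0,A_p)^{k+1}$. As $\lambda_0$ runs along this line, these disks cover a sector $\{\mu:|\arg(\mu-\omega)|<\pi/2+\delta\}$ for some $\omega\in\R$ and some $\delta>0$, on which an estimate $\|R(\mu,A_p)\|\leq M/|\mu-\omega|$ is obtained. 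This is precisely sectoriality of $(A,D(A_p))$.

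Finally, since $C_c^\infty(\R^N)\subset D(A_p)$ is dense in $L^p(\R^N)$ (Lemma 1), the operator is densely defined and sectorial, and \cite[Ch.~II, Theorem 4.6]{eng-nag} produces the required analytic $C_0$-semigroup. I do not expect any genuine obstacle at this step: all of the substantive analytic content has been absorbed into Theorems \ref{agmon-comp} and \ref{solv}, and what remains is the textbook passage from a half-plane resolvent bound to sectoriality, whose only subtlety—tracking constants in the Neumann series extension so that the opening angle of the sector is strictly larger than $\pi$—is exactly what \cite[Proposition 3.2.8]{lor-rha-book} is designed to package.
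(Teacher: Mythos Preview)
Your proposal is correct and follows essentially the same approach as the paper: combine Theorems \ref{agmon-comp} and \ref{solv}, invoke \cite[Proposition 3.2.8]{lor-rha-book} to obtain sectoriality, and conclude via \cite[Ch.~II, Theorem 4.6]{eng-nag}. In fact you give more detail than the paper does, since the paper records this theorem with only a one-line justification citing those two references.
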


The following inequality is a simple consequence that will be useful in the next section. 
\begin{coro}\label{agmon-comp-2}
Let  $1<p<\infty$ and assume that $a$ satisfies (\ref{gradient}). Then there exist $r_p>0$ and $C=C(N,p,c)$ such that  for every  $\Rp\lambda>r_p$ and $u\in D(A_p)$
\begin{equation*}
|\lambda|\|u\|_p+|\lambda|^\frac{3}{4}\|a^{\frac{1}{2}}Du\|_{p}+|\lambda|^\frac{1}{2}\|aD^{2}u\|_{p}+|\lambda|^\frac{1}{4}\|a^{\frac{3}{2}}D^{3}u\|_{p}+ \|a^2D^4u\|_p\leq C \|\lambda u-Au\|_p.
\end{equation*}
\end{coro}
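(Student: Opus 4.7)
The strategy is to combine the bound from Theorem \ref{agmon-comp} with the a priori estimates of Proposition \ref{domain} and the interpolation inequalities of Lemma \ref{interpolation} (valid on $D(A_p)$ by the density lemma), choosing in each interpolation inequality the parameter $\varepsilon$ as a suitable negative power of $|\lambda|$.

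First I would handle the two endpoint terms. Theorem \ref{agmon-comp} directly gives $|\lambda|\|u\|_p \leq C\|\lambda u - Au\|_p$. Then, writing $Au = \lambda u - (\lambda u - Au)$ and using Proposition \ref{domain},
\[
\|a^{2}D^{4}u\|_p \leq C\bigl(\|Au\|_p+\|u\|_p\bigr) \leq C\bigl(\|\lambda u - Au\|_p + |\lambda|\|u\|_p + \|u\|_p\bigr)\leq C\|\lambda u - Au\|_p,
\]
where in the last step I use that $|\lambda|\geq r_p$ so that $\|u\|_p \lesssim \|\lambda u - Au\|_p$.

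For the three intermediate terms, I would apply inequalities \eqref{eq:interp00-01}, \eqref{eq:interp00-02}, \eqref{eq:interp00-03} from Lemma \ref{interpolation} with the common choice $\varepsilon = |\lambda|^{-1/4}$. This yields
\begin{align*}
\|a^{1/2}Du\|_p &\leq |\lambda|^{-3/4}\|a^{2}D^{4}u\|_p + C|\lambda|^{1/4}\|u\|_p,\\
\|a D^{2}u\|_p &\leq |\lambda|^{-1/2}\|a^{2}D^{4}u\|_p + C|\lambda|^{1/2}\|u\|_p,\\
\|a^{3/2}D^{3}u\|_p &\leq |\lambda|^{-1/4}\|a^{2}D^{4}u\|_p + C|\lambda|^{3/4}\|u\|_p.
\end{align*}
Multiplying by $|\lambda|^{3/4}$, $|\lambda|^{1/2}$, $|\lambda|^{1/4}$ respectively and applying the two endpoint estimates just established gives each desired bound. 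Summing the five contributions produces the corollary.

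There is no real obstacle beyond bookkeeping: the interpolation inequalities of Lemma \ref{interpolation} are stated on $C_c^\infty(\R^N)$, but the density lemma at the top of Section 2 allows them to be transferred to $D(A_p)$, which is the only nontrivial point to mention. The choice $\varepsilon = |\lambda|^{-1/4}$ is dictated by the requirement that the scaling in front of $\|a^{2}D^{4}u\|_p$ after the multiplication by the $|\lambda|^{(4-k)/4}$ factor be $O(1)$, which is exactly what matches the natural parabolic scaling of a fourth-order operator.
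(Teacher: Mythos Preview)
Your proof is correct and is precisely the argument the paper has in mind: its proof reads in full ``The proof follows by Lemma \ref{interpolation} and Theorem \ref{agmon-comp},'' and what you have written is exactly the natural unpacking of that sentence (endpoint bounds from Theorem \ref{agmon-comp} and \eqref{eq:calderon-zigmund}, intermediate bounds from \eqref{eq:interp00-01}--\eqref{eq:interp00-03} with $\varepsilon=|\lambda|^{-1/4}$). Your remark that the interpolation inequalities transfer from $C_c^\infty(\R^N)$ to $D(A_p)$ via the density lemma is the only point worth making explicit, and you have made it.
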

\begin{proof} The proof follows by Lemma \ref{interpolation} and Theorem \ref{agmon-comp}.\end{proof}

\begin{remark}
We observe   that, by perturbation techniques, one can recover the same generation result for a more general operator containing lower order terms with unbounded coefficients satisfying growth bounds. Indeed, if the $h$ order coefficients, say $b_h(x)$,  satisfy  for every $h=1,2,3$
\[|b_h(x)|\leq k a(x)^{\frac{h}{2}}\qquad\forall\,x\in\R^N\]
for a positive constant $k$, then the resulting operator is, by Lemma \ref{interpolation}, $A$-bounded and one can apply \cite[Theorem 2.10]{eng-nag}.
\end{remark}

\section{Generation results in $C_{b}(\R^N)$ and $L^\infty(\R^N)$}

In this section we prove generation results in the spaces $C_b(\R^N)$ and $L^\infty(\R^N)$. To this purpose we assume   that there exist   some positive constants $\nu$ and $c$ such that
\begin{equation} \label{gradient-infty}\tag{$H_{\infty}$}
a  \in C^1(\R^N),\quad a(x)\geq \nu>0\ {\rm for\ all}\ x\in\R^N,\quad |D a(x)|\leq c a(x)^\frac{1}{2}.
\end{equation}

We recall the following Morrey embedding, cf. \cite[Lemma 2.1]{bro-opi}. 
\begin{lemma}[Morrey] If $p>N$, then $W^{1,p}(\R^{N})$ is continuously embedded in $C_{b}(\R^{N})$. 
Moreover
\begin{equation}\label{morrey}
\|g\|_{L^{\infty} (B_{R}(x_{0})) }\leq CR^{-\frac Np}\left( \|g\|_{L^{p} (B_{R}(x_{0}))}+R\|Dg\|_{L^{p} (B_{R}(x_{0}))} \right)
\end{equation}
for every $g\in W^{1,p}_{loc}(\R^{N})$, $x_{0}\in \R^N$ and $R>0$.	
\end{lemma}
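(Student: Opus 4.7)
The statement is the classical Morrey embedding in scale-invariant form, so the plan is to establish the standard (unit-scale) Morrey inequality and then rescale to obtain the ball version with the correct powers of $R$.

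For the unit-scale estimate, I would first show that for every $g\in C^1(\ov{B_1(0)})$ one has the pointwise representation obtained by integrating $g$ along the segment from $y$ to $x$: fix $x\in B_1(0)$, write $g(x)-g(y)=-\int_0^{|x-y|} Dg(y+t\omega)\cdot\omega\,dt$ with $\omega=(x-y)/|x-y|$, and integrate over $y\in B_1(0)$. A change to polar coordinates centred at $x$, together with Fubini's theorem, yields a Riesz-potential bound of the form $|g(x)-\bar g|\leq C\int_{B_1(0)} |Dg(y)||x-y|^{1-N}\,dy$, where $\bar g$ is the mean of $g$ over $B_1(0)$. Applying H\"older's inequality and using that $|x-y|^{(1-N)p/(p-1)}$ is integrable on $B_1(0)$ exactly when $p>N$, one gets
\[
\|g-\bar g\|_{L^\infty(B_1(0))}\leq C\|Dg\|_{L^p(B_1(0))},
\]
and since $|\bar g|\leq C\|g\|_{L^p(B_1(0))}$, this produces
\[
\|g\|_{L^\infty(B_1(0))}\leq C\bigl(\|g\|_{L^p(B_1(0))}+\|Dg\|_{L^p(B_1(0))}\bigr).
\]
Density of $C^1(\ov{B_1(0)})$ in $W^{1,p}(B_1(0))$ extends the inequality to all $g\in W^{1,p}_{\rm loc}(\R^N)$, and taking $B_R(x_0)\supset\R^N$ in the limit gives the global embedding $W^{1,p}(\R^N)\hookrightarrow C_b(\R^N)$ (continuity of the representative follows from the Riesz-potential bound applied to differences $g(x)-g(x')$).

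For the scale-invariant inequality \eqref{morrey}, I would rescale. Given $x_0\in\R^N$ and $R>0$, set $\tilde g(y)=g(x_0+Ry)$ for $y\in B_1(0)$. A direct change of variable gives
\begin{align*}
\|\tilde g\|_{L^\infty(B_1(0))}&=\|g\|_{L^\infty(B_R(x_0))},\\
\|\tilde g\|_{L^p(B_1(0))}&=R^{-N/p}\|g\|_{L^p(B_R(x_0))},\\
\|D\tilde g\|_{L^p(B_1(0))}&=R^{1-N/p}\|Dg\|_{L^p(B_R(x_0))}.
\end{align*}
Plugging these into the unit-ball estimate for $\tilde g$ yields exactly
\[
\|g\|_{L^\infty(B_R(x_0))}\leq CR^{-N/p}\bigl(\|g\|_{L^p(B_R(x_0))}+R\|Dg\|_{L^p(B_R(x_0))}\bigr).
\]

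The only genuinely non-routine step is the unit-scale Riesz-potential estimate; once that is in hand, the rescaling is entirely mechanical. Since this is a completely classical result (and in the paper it is imported verbatim from \cite{bro-opi}), the expected presentation is just a one-line citation rather than a full proof.
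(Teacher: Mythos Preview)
Your sketch is a correct standard proof of the Morrey inequality, and you correctly anticipate the paper's treatment: the lemma is not proved in the paper at all but simply recalled with a citation to \cite[Lemma 2.1]{bro-opi}. There is nothing to compare, since the paper offers no argument of its own.
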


As a consequence we prove the following result. 	
\begin{proposition}\label{prop:infty}
Let $p>N$ and assume that $a$ satisfies (\ref{gradient-infty}). Then there exists $ r'_p>0$ such that for every  $\Rp\lambda>r'_p$ and
$u\in C^{3}_{b}(\R^{N})\cap W^{4,p}_{\rm loc}(\R^{N})$ with $a^{\frac h2}D^{h}u\in C_{b}(\R^{N})$ for $h=0,1,2,3$ and $Au \in C_{b}(\R^{N})$, it holds
\begin{align*}
 \sum_{h=0}^{3}|\lambda|^{1-\frac h4} \|a^{\frac h2}D^{h}u\|_{\infty} 
	+\sup_{x\in \R^{N}} \frac  { |\lambda|^{\frac {N}{4p}}} {a(x)^{\frac{N}{2p}}} \|a^{2}D^{4}u\|_{ L^{p}(B_{R({x})  }(x))  }\leq C\|\lambda u-Au\|_{\infty}
\end{align*}
where $R({x})=\frac{a(x)^{\frac 12}}{|\lambda |^{\frac 14}}$.
\end{proposition}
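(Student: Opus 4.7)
\medskip
\noindent\textbf{Proof plan.}
The plan is to implement a Stewart--Masuda localization coupled with the $L^p$-resolvent estimate of Corollary \ref{agmon-comp-2} and the Morrey embedding stated above. Fix $x_0\in\R^N$ and set $R=R(x_0)=a(x_0)^{1/2}/|\lambda|^{1/4}$. Choose a cut-off $\vartheta\in C_c^\infty(\R^N)$ with $\vartheta\equiv 1$ on $B_R(x_0)$, $\vartheta\equiv 0$ on $B_{2R}^c(x_0)$ and $|D^k\vartheta|\leq C R^{-k}=C|\lambda|^{k/4}a(x_0)^{-k/2}$ for $k=0,\dots,4$. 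For $\Rp\lambda$ larger than a threshold $r'_p$, the gradient bound on $a$ in \eqref{gradient-infty} implies $a(x)$ is comparable to $a(x_0)$ uniformly on $B_{2R}(x_0)$, so weights based at $x_0$ and weights $a(x)$ can be interchanged up to a fixed multiplicative constant.

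I would then apply Corollary \ref{agmon-comp-2} to $\vartheta u\in W^{4,p}_{\rm comp}(\R^N)$, getting
\[
|\lambda|\|\vartheta u\|_p+\sum_{h=1}^{3}|\lambda|^{1-h/4}\|a^{h/2}D^h(\vartheta u)\|_p+\|a^2D^4(\vartheta u)\|_p\leq C\|\lambda\vartheta u-A(\vartheta u)\|_p.
\]
Combining this with the Morrey inequality on $B_R(x_0)$ for $D^h u$ (using $\vartheta\equiv 1$ there and $a\approx a(x_0)$), one gets for $h=0,1,2,3$
\[
|\lambda|^{1-h/4}\|a^{h/2}D^h u\|_{L^\infty(B_R(x_0))}\leq C R^{-N/p}\|\lambda\vartheta u-A(\vartheta u)\|_p,
\]
the two Morrey terms being absorbed into the resolvent estimate by the matching $|\lambda|$-powers ($Ra(x_0)^{-1/2}=|\lambda|^{-1/4}$ is exactly what is needed to pass from $a^{h/2}D^{h+1}u$ to $a^{(h+1)/2}D^{h+1}u$ with the right $|\lambda|$-weight). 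The $D^4$ contribution comes for free since $R^{-N/p}=|\lambda|^{N/(4p)}a(x_0)^{-N/(2p)}$.

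Next I would decompose $\lambda\vartheta u-A(\vartheta u)=\vartheta(\lambda u-Au)+a^2[\Delta^2,\vartheta]u$. The forcing term gives $R^{-N/p}\|\vartheta(\lambda u-Au)\|_p\leq C\|\lambda u-Au\|_\infty$. The commutator expands as a sum of terms $a^2 D^k\vartheta\,D^{4-k}u$ with $k=1,\dots,4$, supported in $B_{2R}(x_0)\setminus B_R(x_0)$; the pointwise bound $a^2|D^k\vartheta|\leq C a(x_0)^{(4-k)/2}|\lambda|^{k/4}$ together with $a\approx a(x_0)$ and a volume estimate produces
\[
R^{-N/p}\|a^2D^k\vartheta\,D^{4-k}u\|_p\leq C|\lambda|^{k/4}\|a^{(4-k)/2}D^{4-k}u\|_{L^\infty(B_{2R}(x_0))},
\]
matching exactly the $L^\infty$-norms that appear on the left-hand side (with $h=4-k$). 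Taking the supremum over $x_0$ and reindexing yields an inequality of the form $\Phi(\lambda,u)\leq C\|\lambda u-Au\|_\infty+C_0\Phi(\lambda,u)$, where $\Phi(\lambda,u)$ is the quantity the proposition claims to control.

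The main obstacle is closing the estimate at this step: one needs $C_0<1$ in order to absorb the commutator. I would do so by introducing a small parameter $\eta\in(0,1)$ in the cutoff (choosing the radius to be $\eta R(x_0)$), so that the constants gaining $|D^k\vartheta|$-weights scale with $\eta$, and then balancing against the interpolative inequalities from Lemma \ref{interpolation} applied on each localized ball. The requirement $\Rp\lambda>r'_p$ then appears from the compatibility condition $c\eta/|\lambda|^{1/4}\le\tfrac12$ needed to keep $a(x)$ close to $a(x_0)$ on the support of $\vartheta$. Once $C_0<1$ is achieved, a standard absorption argument yields the claimed estimate.
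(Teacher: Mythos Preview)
Your overall strategy---Stewart--Masuda localization at scale $R(x_0)=a(x_0)^{1/2}/|\lambda|^{1/4}$, Morrey's inequality to pass from pointwise to local $L^p$, Corollary~\ref{agmon-comp-2} applied to the cut-off function, commutator expansion, and taking the supremum over $x_0$---is exactly the paper's. You also correctly identify the crux: after the commutator bookkeeping one arrives at
\[
\Phi(\lambda,u)\leq C\|\lambda u-Au\|_\infty+C_0\,\Phi(\lambda,u),
\]
and one must engineer $C_0<1$.

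The gap is in your absorption mechanism. Shrinking the cut-off radius to $\eta R$ with $\eta\in(0,1)$ moves the constants in the \emph{wrong} direction: the derivative bounds become $|D^k\vartheta|\lesssim(\eta R)^{-k}$, and after the volume factor $|B_{2\eta R}|^{1/p}\sim(\eta R)^{N/p}$ cancels the Morrey prefactor $(\eta R)^{-N/p}$, each commutator term contributes $C\eta^{-k}|\lambda|^{k/4}\|a^{(4-k)/2}D^{4-k}u\|_\infty$, which blows up as $\eta\to0$. The interpolation inequalities of Lemma~\ref{interpolation} cannot rescue this either: the commutator terms already carry the \emph{exact} powers $|\lambda|^{k/4}$ matching the left-hand side, so there is no slack to trade.

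The paper's device is the opposite: keep the inner radius $R$ and \emph{enlarge} the outer radius to $R(1+\delta)$ with $\delta>1$ large. Then $|D^k\vartheta|\leq C R^{-k}\delta^{-k}$, while the volume factor becomes $C(1+\delta)^{N/p}R^{N/p}$. The commutator constant is therefore of order $(1+\delta)^{N/p}\delta^{-k}\leq (1+\delta)^{N/p}\delta^{-1}$, and this tends to $0$ as $\delta\to\infty$ precisely because $p>N$. One fixes $\delta_0$ large so that $C(1+\delta_0)^{N/p}\delta_0^{-1}\leq\tfrac14$; the threshold $r'_p$ is then chosen so that $\kappa=1+\tfrac{c}{2}\tfrac{1+\delta_0}{|\lambda|^{1/4}}$ stays bounded (say $\kappa^4\leq2$), which guarantees $a(x)\sim a(x_0)$ on $B_{R(1+\delta_0)}(x_0)$. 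This is where the hypothesis $p>N$ is genuinely used, and your write-up does not exploit it.
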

\begin{proof}
Let $u$ be as in the hypotheses of the proposition,   $\Rp\lambda>r_p$ as in Corollary \ref{agmon-comp-2} and let $\delta>0$ be a constant which   will be chosen later.
Fix $x_{0}\in \R^{N}$ and set $R=R(x_{0})=\frac{a(x_{0})^{\frac 12}}{|\lambda |^{\frac 14}}$.
Let $\vartheta \in C_{c}^{\infty}(\R^{N})$ be such that $0\leq \vartheta \leq 1$, $\vartheta =1$ in $B_{R}(x_{0})$, $\vartheta=0$ in
$B^{c}_{R(1+\delta)} (x_{0})$ and $|D^{h}\vartheta|\leq {C}{R^{-h}\delta ^{-h}} $ for $h=1,2,3,4$  for some positive constant $C$ which does not depend on $x_{0}$ and $\delta.$ 
By condition $||D a^{\frac{1}{2}}||_{\infty}\leq \frac{c}{2}$ 
we have that
\[
a(x)^{\frac 12}\leq \left( 1+\frac c2\frac{1+\delta}{| \lambda|^{\frac 14}} \right)a(x_{0})^{\frac 12}=\kappa a(x_{0})^{\frac 12}
\text{ for every }x\in B_{R(1+\delta)}(x_{0})
\]
and then for $h=1,2,3,4$
\[
a(x)^{\frac h2}\leq \kappa^{h}a(x_{0})^{\frac h2} \text{ for every }x\in B_{R(1+\delta)}(x_{0});
\] 
moreover, choosing $|\lambda|\geq c^{4}$ we have
\[
a(x_{0})^{\frac 12}\leq \left(1-\frac{c}{2}\frac{1}{|\lambda|^{\frac 14}}\right)^{-1}a(x)^{\frac{1}{2}}\leq 2 a(x)^{\frac{1}{2}}
\text{ for every }x\in B_{R}(x_{0})
\]
and then for $h=1,2,3,4$
\[
a(x_{0})^{\frac h2}\leq 2^{h} a(x)^{\frac{h}{2}}\text{ for every }x\in B_{R}(x_{0}).
\]
Now for $h=0,1,2,3$ from \eqref{morrey} we have
\begin{align*}
 |\lambda| ^{1-\frac{h}{4}}|a(x_{0})^{\frac{h}{2}}&D^{h}u(x_{0})|
	\leq CR^{-\frac Np}\left( |\lambda| ^{1-\frac{h}{4}}|a(x_{0})^{\frac{h}{2}}| \|D^{h}u\|_{p,R} 
	+ |\lambda| ^{1-\frac{h}{4}}|a(x_{0})^{\frac{h}{2}}| R\|D^{h+1}u\|_{p,R} \right) \\
&\quad \leq CR^{-\frac Np}\left( |\lambda| ^{1-\frac{h}{4}}|a(x_{0})^{\frac{h}{2}}| \|D^{h}u\|_{p,R} 
	+ |\lambda| ^{1-\frac{h+1}{4}}|a(x_{0})^{\frac{h+1}{2}}| \|D^{h+1}u\|_{p,R} \right) \\
&\quad \leq 8CR^{-\frac Np}\left( |\lambda| ^{1-\frac{h}{4}} \| a^{\frac{h}{2}}D^{h}u\|_{p,R} 
	+ |\lambda| ^{1-\frac{h+1}{4}}\|a^{\frac{h+1}{2}}D^{h+1}u\|_{p,R} \right). 	
\end{align*}
Summing over $h=0,1,2,3$ we have
\begin{equation*}
\sum_{h=0}^{3}|\lambda| ^{1-\frac{h}{4}}|a(x_{0})^{\frac{h}{2}}D^{h}u(x_{0})|
	\leq CR^{-\frac Np}\sum_{h=0}^{4}  |\lambda| ^{1-\frac{h}{4}} \| a^{\frac{h}{2}}D^{h}u\|_{p,R} 
\end{equation*}
and also
\begin{equation}\label{eq:sum-lifnty1}
\sum_{h=0}^{3}|\lambda| ^{1-\frac{h}{4}}|a(x_{0})^{\frac{h}{2}}D^{h}u(x_{0})|
		+\frac{|\lambda |^{\frac {N}{4p}}}{a(x_{0})^{\frac {N}{2p}}}\|a^{2}D^{4}u\|_{p,R}
	\leq CR^{-\frac Np}\sum_{h=0}^{4}  |\lambda| ^{1-\frac{h}{4}} \| a^{\frac{h}{2}}D^{h}u\|_{p,R}.
\end{equation}
Now, setting $v=\vartheta u$, from Corollary \ref{agmon-comp-2} we have
\begin{align*}
 \sum_{h=0}^{4}  |\lambda| ^{1-\frac{h}{4}} \| a^{\frac{h}{2}}&D^{h}u\|_{p,R}\leq 
	\sum_{h=0}^{4}  |\lambda| ^{1-\frac{h}{4}} \| a^{\frac{h}{2}}D^{h}v\|_{p,R(1+\delta)}\\
&\quad \leq C\|\lambda v-Av\|_{p}\\
&\quad \leq C\left( \| \lambda u-Au\|_{p,R(1+\delta)} 
	+\sum _{h=0}^{3}\|a^{2}D^{h}uD^{4-h}\vartheta\|_{p,R(1+\delta)} \right)\\
&\quad \leq C\left( \| \lambda u-Au\|_{p,R(1+\delta)} 
	+\sum _{h=0}^{3} \frac{1}{R^{4-h}\delta ^{4-h}}    \kappa^{4-h} a(x_{0})^{2-\frac h2}\|a^{\frac{h}{2}}D^{h}u\|_{p,R(1+\delta)} \right)\\
&\quad \leq C\left( \| \lambda u-Au\|_{p,R(1+\delta)} 
	+\sum _{h=0}^{3} \frac{\kappa^{4-h}}{\delta ^{4-h}}|\lambda |^{1-\frac h4}     \|a^{\frac{h}{2}}D^{h}u\|_{p,R(1+\delta)} \right)\\
&\quad \leq C(1+\delta)^{\frac Np} R^{\frac Np}\left( \| \lambda u-Au\|_{\infty} 
	+\sum _{h=0}^{3} \frac{\kappa^{4-h}}{\delta ^{4-h}}|\lambda |^{1-\frac h4}     \|a^{\frac{h}{2}}D^{h}u\|_{\infty} \right).
\end{align*}
Then, by \eqref{eq:sum-lifnty1}, we have
\begin{align*}
\sum_{h=0}^{3}|\lambda| ^{1-\frac{h}{4}}&|a(x_{0})^{\frac{h}{2}}D^{h}u(x_{0})|
		+\frac{|\lambda |^{\frac {N}{4p}}}{a(x_{0})^{\frac {N}{2p}}}\|a^{2}D^{4}u\|_{p,R} \\
&\quad \leq C(1+\delta)^{\frac Np} \left( \| \lambda u-Au\|_{\infty} 
	+\sum _{h=0}^{3} \frac{\kappa^{4-h}}{\delta ^{4-h}}|\lambda |^{1-\frac h4}     \|a^{\frac{h}{2}}D^{h}u\|_{\infty} \right).
\end{align*}
Taking the supremum over $x_{0}\in \R^{N}$ we have
\begin{align*}
\sum_{h=0}^{3}|\lambda| ^{1-\frac{h}{4}}&\|a^{\frac{h}{2}}D^{h}u\|_{\infty}
		+\sup_{x_{0}\in \R^{N}}\frac{|\lambda |^{\frac {N}{4p}}}{a(x_{0})^{\frac {N}{2p}}}\|a^{2}D^{4}u\|_{p,R} \\
&\quad \leq C(1+\delta)^{\frac Np} \left( \| \lambda u-Au\|_{\infty} 
	+\sum _{h=0}^{3} \frac{\kappa^{4-h}}{\delta ^{4-h}}|\lambda |^{1-\frac {h}{4}}     \|a^{\frac{h}{2}}D^{h}u\|_{\infty} \right).
\end{align*}
Now choose $\delta_{0}>1$ such that $C\frac{(1+\delta_{0})^{\frac Np}}{\delta_{0}}\leq \frac 14$ and  $r'_{p}\geq \max \{c^{4},r_{p}\}$ so that if $|\lambda|\geq r'_{p}$ we have
$\kappa^{4}=\left( 1+\frac c2\frac{1+\delta_{0}}{|\lambda|^{\frac 14}}\right)^{4}\leq 2$ and
$C(1+\delta_{0})^{\frac Np}\frac {\kappa^{4-h}}{\delta_{0}^{4-h}}\leq C(1+\delta_{0})^{\frac Np}\frac {\kappa^{4}}{\delta_{0}}\leq \frac 12$. This leads to the desired result.

\end{proof}

Let us now define 
\[
D(A_{0})=\left\{ u\in \underset{1\leq p <\infty}{\bigcap} W^{4,p}_{\rm loc}(\R^{N})\cap C_b(\R^N)\,|\,
		a^{\frac h2}D^{h}u, a^2\Delta^2u\in C_{b}(\R^{N}), h=0,\dots, 3 \right\}.
\]
We obtain the following generation result in $C_b(\R^N)$.
\begin{theorem}\label{generatcb}
Assume that $a$ satisfies (\ref{gradient-infty}). Then $(A_{0},D(A_{0}))$ generates an analytic $C_0$-semigroup in 
$C_{b}(\R^N)$.
\end{theorem}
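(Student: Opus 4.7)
The plan is to mirror the $L^p$ generation argument (Theorems \ref{agmon-comp}--\ref{solv}). Proposition \ref{prop:infty} already supplies the crucial resolvent bound $|\lambda|\|u\|_\infty \le C\|\lambda u - Au\|_\infty$ for any $u$ satisfying its hypotheses and any $\lambda$ with $\Rp\lambda > r'_p$; in particular this applies to $u \in D(A_0)$, giving injectivity of $\lambda - A_0$ and the sectorial-type norm estimate. The proof of Proposition \ref{prop:infty} is uniform in $\theta = \arg\lambda \in [-\pi/2, \pi/2]$, so the estimate extends to a sector around the positive real axis; thus, once surjectivity is in hand, sectoriality follows from \cite[Proposition 3.2.8]{lor-rha-book} and analyticity of the semigroup from \cite[Ch.~II, Theorem 4.6]{eng-nag}, exactly as in the $L^p$ case.

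For surjectivity I would regularize as in Theorem \ref{solv}, setting $a_\sigma(x) = a(x)/(1+\sigma a(x)) + \sigma$ for $\sigma \in (0,1)$. Then $a_\sigma$ is $C^1_b$, bounded between $\nu+\sigma$ and $\sigma+\sigma^{-1}$, and satisfies (\ref{gradient-infty}) with $c$ independent of $\sigma$. Classical fourth-order generation results for uniformly elliptic operators with bounded $C^1$ leading coefficients (or Proposition \ref{prop:infty} applied directly, exploiting the boundedness of $a_\sigma$) yield that $A_\sigma = -a_\sigma^2 \Delta^2$ generates an analytic semigroup on $C_b(\R^N)$, so $u_\sigma := R(\lambda, A_\sigma)f$ is well defined for $\Rp\lambda$ large. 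Rerunning the proof of Proposition \ref{prop:infty} with $a_\sigma$ in place of $a$ gives constants depending only on $N,p,c$ and hence the uniform bounds
\[
\sum_{h=0}^3 |\lambda|^{1-h/4}\|a_\sigma^{h/2}D^h u_\sigma\|_\infty \le C\|f\|_\infty.
\]
A variant of the covering argument used in Theorem \ref{solv} shows that a common half-plane $\{\Rp\lambda > r'_p\}$ lies in $\rho(A_\sigma)$ for every $\sigma$.

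To pass to the limit $\sigma \to 0$, I would combine the $\sigma$-uniform weighted $C_b$ bounds with the interior $W^{4,p}$-regularity from Lemma \ref{interpolation} to get $\sigma$-uniform bounds on $u_\sigma$ in $W^{4,p}_{\rm loc}(\R^N)$ for every $p < \infty$. Arzelà--Ascoli and weak compactness extract a subsequence $u_{\sigma_n}$ converging locally uniformly in $C^3$ and weakly in $W^{4,p}_{\rm loc}$ to some $u$. Since $a_{\sigma_n} \to a$ locally uniformly, the limit satisfies $\lambda u - Au = f$ classically; a pointwise Fatou argument transfers the uniform weighted estimates to the limit, yielding $a^{h/2}D^h u \in C_b(\R^N)$ for $h=0,1,2,3$ and, via the equation, $Au \in C_b(\R^N)$. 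Thus $u \in D(A_0)$.

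The main obstacle is this final passage to the limit. Because $a_\sigma \to a$ only locally uniformly (far from the origin $a_\sigma$ stays of order $\sigma^{-1}$ while $a$ is unbounded), one cannot simply replace $a_\sigma^{h/2}$ by $a^{h/2}$ inside an $L^\infty$-norm. The remedy is to fix $x \in \R^N$ and observe that $a_{\sigma_n}(x) \to a(x)$ and $D^h u_{\sigma_n}(x) \to D^h u(x)$, so the pointwise bound $|a^{h/2}(x) D^h u(x)| \le C|\lambda|^{h/4-1}\|f\|_\infty$ holds uniformly in $x$, which is exactly the required $C_b$ estimate for the limit. A secondary technical point is the availability of classical generation for $A_\sigma$ on $C_b$ when $a_\sigma$ is only $C^1_b$; this can be handled either via the Stewart--Masuda localization mentioned in the introduction or by applying Proposition \ref{prop:infty} directly in the bounded-coefficient case.
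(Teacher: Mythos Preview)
Your argument is viable in outline but takes a genuinely different route from the paper's. The paper does \emph{not} regularize the coefficient. Instead it truncates the datum: given $f\in C_b(\R^N)$, set $f_n=\varphi_n f$ with $\varphi_n$ a standard cutoff in $B_{2n}$, so that $f_n\in L^p(\R^N)$ for every $p>N$. The $L^p$ solvability already proved in Theorem~\ref{solv} for the \emph{original} operator $A$ then furnishes $u_n\in D(A_p)$ with $\lambda u_n-Au_n=f_n$. The extra hypothesis $a\ge\nu>0$ in \eqref{gradient-infty} is used precisely here to show $a^{h/2}D^h u_n\in W^{1,p}(\R^N)\hookrightarrow C_b(\R^N)$ for $h=0,\dots,3$, so that Proposition~\ref{prop:infty} applies and gives $C_b$ bounds on $u_n$ uniform in $n$. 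A compactness argument in $W^{4,p}_{\rm loc}$ and passage to the limit in $n$ produce $u\in D(A_0)$ solving $\lambda u-Au=f$.

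The trade-off is this. The paper's data-truncation approach is entirely self-contained: every ingredient (Theorem~\ref{solv}, Proposition~\ref{prop:infty}, Morrey embedding) is already in the paper, and no $C_b$ theory for auxiliary operators is needed. Your coefficient-regularization approach reproduces the approximation scheme of Theorem~\ref{solv} a second time, but now the base case---$C_b$ generation for the bounded-coefficient operators $A_\sigma$---is itself a nontrivial input that you flag but do not prove (Proposition~\ref{prop:infty} alone gives only the estimate, not solvability). So while your strategy can be made to work, it is longer and imports an external result; the paper's route is shorter because it cashes in the $L^p$ theory of Section~3 directly.
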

\begin{proof}
We prove that $(A_{0},D(A_{0}))$ is sectorial.
First we prove that the resolvent set is non empty.

Let $f\in C_{b}(\R^{N})$ and $f_{n}=\varphi_{n}f$ where $\varphi_{n}\in C_{c}^{\infty}(\R^{N})$ is such that 
$0\leq \varphi_{n}\leq 1$, $\varphi_{n}=1$, in $B_{n}$ and $\varphi_{n}=0\in B^{c}_{2n}$.
Let $p>N$, since $f_{n}\in L^{p}(\R^{N})$ then, by Theorem \ref{solv}, for $Re\lambda > r_{p} $
there exists a unique $u_{n}\in D(A_{p})$ such that
\begin{equation}\label{eq:solvn}
\lambda u_{n}-Au_{n}=f_{n}.
\end{equation}
We have, in particular, $a^{\frac h2}D^{h}u_{n}\in W^{1,p}(\R^{N})$ for $h=0,1,2,3$.
Indeed, by hypothesis (\ref{gradient-infty}) we have $1\leq \frac{1}{\nu^{\frac 12}}a(x)^{\frac 12}$
and $|Da(x)|\leq ca(x)^{\frac 12}$, then
\begin{align*}
&\left |D\left( a(x)^{\frac h2}D^{h}u_{n}(x) \right)\right |\\
&\qquad \leq C
\left(
\left |a(x)^{\frac h2-1}Da(x)D^{h}u_{n}(x) \right | +\left |a(x)^{\frac h2}D^{h+1}u_{n}(x) \right |
\right)\\
&\qquad \leq C\left(
\left |a(x)^{\frac h2-\frac 12}D^{h}u_{n}(x)\right | +\left |a(x)^{\frac h2}D^{h+1}u_{n}(x)\right |
\right) \\
&\qquad
\leq C\left(
\left |a(x)^{\frac h2}D^{h}u_{n}(x) \right | +\left |a(x)^{\frac {h+1}{2}}D^{h+1}u_{n}(x)\right |
\right)
\end{align*}
which is in $L^{p}(\R^{N})$ since $u_{n}\in D(A_{p})$.
Then $a^{\frac h2}D^{h}u_{n}\in C_{b}(\R^{N})$ for $h=0,1,2,3$ and we can apply
Proposition \ref{prop:infty} and obtain
\begin{align}\label{eq:stima-infty}
 \sum_{h=0}^{3}&|\lambda|^{1-\frac h4} \|D^{h}u_{n}\|_{\infty} \\
& \quad \leq C\left (\sum_{h=0}^{3}|\lambda|^{1-\frac h4} \|a^{\frac h2}D^{h}u_{n}\|_{\infty} 
	+\sup_{x\in \R^{N}} \frac  { |\lambda|^{\frac {N}{4p}}} {a(x)^{\frac{N}{2p}}} \|a^{2}D^{4}u_{n}\|_{ L^{p}(B_{R({x})  }(x))  }\right )\nonumber\\
&\quad	\leq C\|f_{n}\|_{\infty}\leq C\|f\|_{\infty}.\nonumber
\end{align}
Then $\{u_{n}\}$ is a  bounded sequence in $C^{3}(\R^{N})$ and in $W^{4,p}_{\rm loc}(\R^{N})$.
Moreover, there exists a subsequence such that $u_{k_{n}}$ converges to a function $u\in C^{2}_{b}(\R^{N})\cap W^{4,p}(\R^{N})$, locally uniformly in $\R^{N}$ and weakly in $W^{4,p}_{\rm loc}(\R^{N})$ and then strongly in $W^{3,p}_{\rm loc}(\R^{N})$. Therefore, up to a subsequence, $D^{3}u_{n}$ converge pointwise to $D^{3}u$.
Taking the weak limit in \eqref{eq:solvn} we have that $u$ solves weakly the equation $\lambda u-Au=f$.
Furthermore, taking the limit in \eqref{eq:stima-infty} it follows that $a^{\frac h2}D^{h}u\in C_{b}(\R^N)$ for $h=0,1,2,3$
and $ |\lambda|\, \|u\|_{\infty}\leq C\|f\|_{\infty}$.
Finally, since $f\in L^{q}_{\rm loc}(\R^{N})$ for every $q>1$, by elliptic regularity, $u\in W^{4,p}_{\rm loc}(\R^{N})$ and then
$u\in D(A_{0}).$
\end{proof}

The proof of Theorem \ref{generatcb} carries over to the $L^\infty$ setting, therefore the following theorem holds. 
 
\begin{theorem}
Assume that $a$ satisfies (\ref{gradient-infty}). Then $(A_{\infty},D(A_{\infty}))$, where
\[D(A_\infty)=\left\{u\in \underset{1\leq p <\infty}{\bigcap} W^{4,p}_{\rm loc}(\R^N)\cap L^\infty(\R^N):a^{\frac h2}D^{h}u,a^2\Delta^2u\in L^\infty(\R^{N}), h=0,\dots, 3 \right\}\] 
generates an analytic $C_0$-semigroup.
\end{theorem}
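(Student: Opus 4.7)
The plan is to replicate the argument of Theorem \ref{generatcb} line by line, observing that every step remains valid with $L^\infty(\R^N)$ in place of $C_b(\R^N)$. The key observation is that the $\|\cdot\|_\infty$ norm appearing in Proposition \ref{prop:infty} is exactly the $L^\infty$ essential-supremum norm, so the a priori estimate established there does not distinguish between the two settings.

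Given $f \in L^\infty(\R^N)$, I would set $f_n = \varphi_n f$ with $\varphi_n$ the cutoff sequence from the $C_b$ proof; each $f_n$ is bounded measurable with compact support, hence in $L^p(\R^N)$ for every $p > N$, with $\|f_n\|_\infty \leq \|f\|_\infty$. Fix $p > N$ and $\Rp\lambda > r'_p$ as in Proposition \ref{prop:infty}. Theorem \ref{solv} then produces a unique $u_n \in D(A_p)$ solving $\lambda u_n - A u_n = f_n$. The calculation showing $a^{h/2} D^h u_n \in W^{1,p}(\R^N)$ for $h=0,1,2,3$ uses only hypothesis (\ref{gradient-infty}) and membership in $D(A_p)$, so it transfers unchanged; Morrey then gives $a^{h/2} D^h u_n \in C_b(\R^N)$, and Proposition \ref{prop:infty} yields
\[
\sum_{h=0}^{3} |\lambda|^{1-\frac{h}{4}} \|a^{\frac{h}{2}} D^h u_n\|_\infty \leq C\|f_n\|_\infty \leq C \|f\|_\infty.
\]

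Next, by a standard compactness argument I would extract a subsequence $u_{k_n}$ converging locally uniformly in $C^2$ and weakly in $W^{4,p}_{\rm loc}(\R^N)$ to some $u$; passing to a further subsequence, $D^3 u_{k_n} \to D^3 u$ pointwise almost everywhere. Taking the weak limit in the equation gives $\lambda u - A u = f$, and since $f \in L^q_{\rm loc}(\R^N)$ for every $q$, elliptic regularity yields $u \in \bigcap_{1 \leq p < \infty} W^{4,p}_{\rm loc}(\R^N)$. The uniform bounds above combined with a.e.\ pointwise convergence give $\|a^{h/2} D^h u\|_\infty \leq C|\lambda|^{-1+h/4}\|f\|_\infty$ for $h=0,1,2,3$, and the identity $a^2 \Delta^2 u = \lambda u - f$ places $a^2 \Delta^2 u$ directly in $L^\infty(\R^N)$. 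Thus $u \in D(A_\infty)$ and $|\lambda|\|u\|_\infty \leq C\|f\|_\infty$.

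The main subtlety I anticipate lies in preserving the $L^\infty$ bounds through the limit, since the compactness argument only yields locally uniform convergence: here I rely on lower semicontinuity of the essential supremum under a.e.\ pointwise convergence of a subsequence, which is automatic. Once the resolvent bound is established for all $\lambda$ with $\Rp\lambda > r'_p$, sectoriality of $(A_\infty, D(A_\infty))$ follows exactly as in \cite[Proposition 3.2.8]{lor-rha-book}, and the generation of the analytic semigroup is then a consequence of the standard result \cite[Ch.\ II, Theorem 4.6]{eng-nag}.
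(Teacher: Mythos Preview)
Your proposal is correct and follows exactly the approach the paper takes: the paper's own proof is the single sentence ``The proof of Theorem \ref{generatcb} carries over to the $L^\infty$ setting, therefore the following theorem holds,'' and you have spelled out precisely how that carry-over works. The one point you flag as a subtlety---passing the uniform bounds through the a.e.\ limit via lower semicontinuity of the essential supremum---is indeed the only place where the $L^\infty$ argument differs from the $C_b$ one, and your handling of it is correct.
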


\bibliographystyle{amsplain}

\bibliography{bibfile}

\providecommand{\bysame}{\leavevmode\hbox to3em{\hrulefill}\thinspace}
\providecommand{\MR}{\relax\ifhmode\unskip\space\fi MR }
\providecommand{\MRhref}[2]{%
  \href{http://www.ams.org/mathscinet-getitem?mr=#1}{#2}
}
\providecommand{\href}[2]{#2}
\begin{thebibliography}{10}

\bibitem{ada}
D.~Adams, \emph{${L}^p$ potential theory techniques and nonlinear {PDE}},
  Potential theory (Nagoya, 1990), 1–15, de Gruyter, Berlin, 1992.

\bibitem{AGRT}
D.~Addona, F.~Gregorio, A.~Rhandi, and C.~Tacelli, \emph{Bi-{K}olmogorov type
  operators and weighted {R}ellich's inequalities}, Nonlinear Differ. Equ.
  Appl. \textbf{29} (2022), no.~2, Paper No. 13, 37 pp.

\bibitem{agm62}
S.~Agmon, \emph{On the eigenfunctions and on the eigenvalues of general
  elliptic boundary value problems.}, Comm. Pure Appl. Math. \textbf{15}
  (1962), 119–147.

\bibitem{ant}
S.~S. Antman, \emph{Nonlinear problems of elasticity, 2nd edn}, Applied
  Mathematical Sciences, p. 107. Springer, New York, 2005.

\bibitem{bcgt}
S.E. Boutiah, L.~Caso, F.~Gregorio, and C.~Tacelli, \emph{Some results on
  second-order elliptic operators with polinomially growing coefficients in
  ${L}^p$-spaces}, J. Math. Anal. Appl. \textbf{501} (2021), 21 pp.

\bibitem{bro-opi}
R.~C. Brown and B.~Opic, \emph{{E}mbeddings of weighted {S}obolev spaces into
  spaces of continuous functions}, Proc. Roy. Soc. London Ser. A \textbf{439}
  (1992), 279–296.

\bibitem{cup-for}
G.~Cupini and S.~Fornaro, \emph{Maximal regularity in ${L}^p(\mathbb{R}^{N})$
  for a class of elliptic operators with unbounded coefficients}, Diff. Int.
  Eqs. \textbf{17} (2004), 259--296.

\bibitem{dav95a}
E.~B. Davies, \emph{Uniformly elliptic operators with measurable coefficients},
  J. Funct. Anal. \textbf{132} (1995), 141–169.

\bibitem{eng-nag}
K.J. Engel and R.~Nagel, \emph{One parameter semigroups for linear evolutions
  equations}, Springer-Verlag, Berlin, 2000.

\bibitem{for-lor}
S.~Fornaro and L.~Lorenzi, \emph{Generation results for elliptic operators with
  unbounded diffusion coefficients in ${L}^p$ and ${C}_b$-spaces}, Discr. Cont.
  Dyn. Syst. A \textbf{18} (2007), 747--772.

\bibitem{gre-tac}
F.~Gregorio and C.~Tacelli, \emph{Fourth-order {S}chrödinger type operator
  with unbounded coefficients in ${L}^2(\mathbb{R}^{N})$}, Math Meth Appl Sci.
  \textbf{45} (2022), 11246–11261.

\bibitem{lan-maz}
M.~Langer and V.~Maz'ya, \emph{On ${L}^p$-contractivity of semigroups generated
  by linear partial differential operators}, Journal of Functional Analysis
  \textbf{164} (1999), no.~1, 73--109.

\bibitem{lor-rha-book}
L.~Lorenzi and A.~Rhandi, \emph{Semigroups of bounded operators and
  {S}econd-order elliptic and parabolic partial differential equations},
  Monogr. Res. Notes Math. CRC Press, Boca Raton, FL, 2021.

\bibitem{lun}
A.~Lunardi, \emph{Analytic semigroups and optimal regularity in parabolic
  problems}, Birkhauser, 1995.

\bibitem{mel}
V.V. Meleshko, \emph{Selected topics in the history of the two-dimensional
  biharmonic problem}, Appl. Mech. Rev. \textbf{56} (2003), 33--85.

\bibitem{met-oka-sob-spi}
G.~Metafune, N.~Okazawa, M.~Sobajima, and C.~Spina, \emph{Scale invariant
  elliptic operators with singular coefficients}, J. Evol. Equ. \textbf{16}
  (2016), 391--439.

\bibitem{met-spi2}
G.~Metafune and C.~Spina, \emph{Elliptic operators with unbounded diffusion
  coefficients in ${L}^p$ spaces}, Ann. Sc. Norm. Super. Pisa Cl. Sci. (5)
  \textbf{11} (2012), no.~2, 303--340.

\end{thebibliography}

\end{document}